\tikzset{
  > = {Stealth},
  inner sep = 1.8pt,
  outer sep = auto,
}
\newtheorem{theorem}{Theorem}[section]
\newtheorem{lemma}[theorem]{Lemma}
\newtheorem{proposition}[theorem]{Proposition}
\newtheorem{corollary}[theorem]{Corollary}
\newtheorem{remark}[theorem]{Remark}
\theoremstyle{definition}
\newtheorem{definition}[theorem]{Definition}
\theoremstyle{remark} 
\newtheorem*{question*}{\sc Question}
\newtheorem*{example*}{\sc Example}
\newtheorem{alg}[theorem]{\sc Algorithm}
\numberwithin{equation}{section}
\newcommand*{\classicsets}[1]{\mathbb{#1}}
\newcommand*{\R}{\classicsets{R}}
\newcommand*{\bR}{\classicsets{R}}   
\newcommand*{\rgz}{\bR_{> 0}} 
\newcommand*{\rgez}{\bR_{\ge 0}} 
\newcommand*{\bP}{\mathbb{P}}
\newcommand*{\by}{\mathbf{y}}
\newcommand*{\byz}{\mathbf{y_0}}
\newcommand*{\vE}{{E}}
\newcommand*{\bxz}{\mathbf{x_0}}
\DeclareMathOperator{\dd}{d}
\title{Disguised Toric Dynamical Systems}
\date{\today}
\author{Laura Brustenga i Moncusí, Gheorghe Craciun, Miruna-\c Stefana Sorea}
\subjclass[2020] {14P05, 14P10, 14Q30, 34D23, 34C08, 37E99}
\begin{document}
\begin{abstract}
  \noindent We study families of polynomial dynamical systems inspired by biochemical reaction networks. We focus on \emph{complex balanced mass-action systems}, which have also been called \emph{toric}. They are known or conjectured to enjoy very strong dynamical properties, such as existence and uniqueness of positive steady states, local and global stability, persistence, and permanence.
  We consider the class of \emph{disguised toric dynamical systems}, which contains toric dynamical systems, and to which all dynamical properties mentioned above  extend naturally. By means of (real) algebraic geometry we show that some reaction networks have an {\em empty} toric locus or a toric locus of Lebesgue measure zero in parameter space, while their \emph{disguised} toric locus is of positive measure. We also propose \emph{some algorithms} one can use to detect the disguised toric locus.
\end{abstract}
\maketitle

\tableofcontents

\section*{Introduction}
Nonlinear dynamical systems are ubiquitous in the study of many natural phenomena (see, for instance, \cite{guckenheimer2013nonlinear, strogatz2001nonlinear, cy} and references therein). In particular, they have many important applications, in biology and medicine, such as in studying of the spread of infectious diseases, the dynamics of concentrations in biochemical reaction networks, or the dynamics of populations for species that interact in an eco-system.

Inspired by Poincaré~\cite{Poin}, mathematicians decided to look for {\em qualitative} aspects of nonlinear dynamics, since explicit solutions of nonlinear dynamical systems are usually impossible to calculate.  Of course, qualitative questions also prove to be very challenging. For instance, the second part of Hilbert's 16th problem (see \cite{MR1898209} and references therein for the state of the art) concerning polynomial differential equations in the real plane remains open after more than a century. In particular, an upper bound for the number of limit cycles is not known {\em even for quadratic} vector fields in the plane. Another very important feature of many nonlinear dynamical systems is {\em chaotic dynamics}~\cite{guckenheimer2013nonlinear}. The best known chaotic system is the Lorenz system: a quadratic dynamical system in ${\mathbb R}^3$ which is known to have chaotic solutions (the ``butterfly effect'', see \cite{lorenz}).

Nonlinear dynamical systems modelling interaction networks are usually systems of differential equations generated by {\em reaction networks}. The latter are seen as directed graphs living in the Euclidean space, called Euclidean embedded graphs (see \cite{MR3920470}). In the context of mass-action kinetics, the qualitative dynamical properties of these systems are strongly related to the combinatorics of the corresponding Euclidean embedded graph that generates the system. In addition, the right-hand-side of such systems is given by polynomials with real coefficients, giving rise to fruitful connections with the field of algebraic geometry (\cite{DF, MR3457596, MR2561288}).

Actually, the same dynamical system (i.e. the same polynomial right hand side), can be generated by several {\em distinct} reaction networks (see for instance \cite{cjy, CrPa, cy}). In other words, by studying several reaction networks that generate the same system, we might deduce important and useful dynamical behaviour, that would not have been accessible to us via the initial network. In our present work, which is a follow-up of \cite{cjy}, we will use this important property.

In this paper we focus on {\em complex balanced} dynamical systems, which have been introduced in the fundamental paper by Horn and Jackson (\cite{MR400923}) in 1972. Complex balanced systems form a large class of nonlinear dynamical systems for which a remarkable amount of information is known. For example, Horn and Jackson proved the existence and uniqueness of positive equilibria in each stoichiometric compatibility class. In other words, up to conservation laws, complex balanced dynamical systems have a unique positive steady state; moreover, these steady states are locally asymptotically stable. In the last decades, complex balanced dynamical systems have been proven or conjectured to enjoy exceptionally strong dynamical properties, such as global stability in each stoichiometric compatibility class,  impossibility of oscillations and chaotic dynamics, persistence, and permanence.

One of the current main open questions and motivation in the field of chemical reaction network theory is the Global Attractor Conjecture, which was stated in 1974 by Horn in \cite{horn74}. For a recently proposed proof see \cite{GlAttrConj}. The conjecture says that complex balanced mass-action dynamical systems
are globally stable within each positive stoichiometric compatibility class, that is, they have a globally attracting point (up to conservation laws). See \cite[Section 2.2]{MR3920470}. The global attractor conjecture has been proven under several hypotheses: in the case where the dimension of the stoichiometric compatibility class is lower or equal to three, and in all dimensions if the Euclidean embedded graph is connected and in all dimensions in the case of strongly endotactic networks. See \cite{cy} and references therein for the state of the art.

Complex balanced dynamical systems have been recently called {\em toric dynamical systems}~\cite{MR2561288}, due to their strong connections to combinatorial and computational algebraic geometry. If the parameters of the system verify certain algebraic conditions, then the corresponding system is complex balanced (i.e., toric). It was also shown that the moduli spaces of toric dynamical systems are toric varieties~\cite{MR2561288}. This is advantageous, since toric varieties (see \cite{bernd}) have particularly nice computational and combinatorial features. However, for most networks, the set in parameter space that gives rise to toric systems has Lebesgue measure zero. We will refer to this set of parameters as the {\em toric locus} of the network. Note that recently there has been an increasing interest on the study of the toric locus, see also \cite{Connected}.

Our main contribution is proving that the dynamical properties of toric (i.e., complex balanced) dynamical systems are true for a larger class of dynamical systems, that we call \emph{disguised toric dynamical systems}, see \cref{def:disguised-toric-dynamical-system-II}. Roughly speaking, we expand the toric locus from sets of Lebesgue measure zero in the positive orthant, to sets of positive measure. We also present an explicit algorithm (see Algorithm \ref{algorithmDisguised1}) that can be used systematically in order to find \emph{the disguised toric locus} (Definition \ref{not:khat}), given a reaction network.

Regarding the structure of the paper: in \cref{sec:preliminaries} and \cref{sec:disguisedtoric} we give the standard terminology. Next we introduce the notion of \emph{disguised toric dynamical systems}.

\cref{sec:complete3pts} deals with the dynamical system generated by the complete graph consisting of three vertices. Here the toric locus is a codimension-1 semialgebraic variety inside the positive orthant. However, the disguised toric locus is the whole space of positive rate constants.

In \cref{sec:quadrilat} we focus on the dynamical system generated by the graph with four collinear vertices, that we call ``the quadrilateral on a line''. In Theorem \ref{th:4gon}, we show that the parameter space can be decomposed in four chambers, and we study the corresponding dynamical behaviour in each of these chambers. In particular, there are three chambers where the dynamical systems are complex balanced for every parameter values. However, in the fourth chamber there are parameter values for which the dynamical system is not disguised toric. Furthermore we give a complete characterization of the complex balanced dynamical systems belonging to this special chamber. That is, we give necessary and sufficient conditions for the systems in this chamber to be disguised toric: see \cref{thr:quadrilater-line-main}. In the literature (see \cite{shiuTh}) it was known that if the parameters lay on a certain hypersurface (the Segre variety in this case), then the system generated by the quadrilateral on a line is complex balanced. Our main result in Theorem \ref{thr:quadrilater-line-main} states that all the parameters above this Segre variety give rise to complex balanced dynamical systems. That is, we pass from a set of Lebesgue measure zero (a hypersurface) to a set of positive Lebesgue measure. Next, in \cref{sec:globally-stable-not-disguised-toric} we study the multi-stationarity region in the parameter space, which is delimited by the zero locus of a certain discriminant (Figure \ref{fig:discriminant}). To this end, we use the notion of detailed balance and tools from real algebraic geometry, such as the discriminant of a real polynomial and Descartes' rule of signs.

In \cref{sec:rectange-E-graph} we consider a non-weakly reversible reaction network. Note that weak-reversibility is a necessary condition for obtaining a nonempty toric locus. However, as was shown in \cite{cjy}, the dynamical system generated by this network can be realized by other reaction networks, which might exhibit nicer combinatorial properties, such as weak reversibility. Using the latter together with algebraic tools such as quantifier elimination, we show that the disguised toric locus of the system turns out to be a set of positive measure in the parameter space.

\cref{sec:the-n-gon-in-a-line} is dedicated to the generalization of our study. We prove that the equilibria in the single-sign-change chambers for the ``$N$-gon on a line'' are detailed balanced, thus complex balanced.

In \cref{sect:algor2} we propose some systematic procedures one could follow in order to extend the toric locus to the \emph{disguised} toric locus of a polynomial dynamical system: this is Algorithm \ref{algorithmDisguised1}. The main tools in the algorithm are  properties of the Euclidean embedded graphs that generate the given dynamical system. The main idea is that one can add some degrees of freedom by introducing new positive real variables in the process of generating the same dynamical system using a different reaction network. Instances where we apply the steps of Algorithm \ref{algorithmDisguised1} appear throughout our paper: see the triangle on a line (\cref{sec:complete3pts}), the quadrilateral on a line (\cref{sec:quadrilat}) or \cref{sec:rectange-E-graph}, where by using \cref{algorithmDisguised1} we manage to extend an empty toric locus to a disguised toric locus of positive Lebesgue measure.

\section*{Acknowledgments}
The  authors would like to thank Bernd Sturmfels for bringing the team together, for giving us the opportunity to work on this project in the nice environment of the Nonlinear Algebra group at the Max Planck Institute for Mathematics in the Sciences, in Leipzig, and for his inspiring suggestions and comments. The authors would like to acknowledge the support of the Max Planck Institute for Mathematics in the Sciences, where most of this work was carried out. LBM’s contribution has been supported by the Novo Nordisk Foundation grant NNF18OC0052483. GC was supported by NSF grants DMS-1816238 and DMS-2051568 and by a Simons Foundation fellowship. MSS thanks Antonio Lerario and Andrei Agrachev for their support and excellent working conditions during her postdoc at SISSA, Trieste.

\section{Preliminaries}
\label{sec:preliminaries}

In this section we present standard terminology and notations for the study of chemical reaction networks. We refer to the textbook \cite{feinberg} for a complete introduction to the subject.

 Throughout this paper, by $\rgz^\vE$ we denote tuples of real numbers indexed by elements of $E$. We use bold letters to refer multi-index objects as vectors and monomials.

\begin{definition}\cite{MR3920470}\label{def:E-graph}
  A \emph{Euclidean embedded graph} (or \emph{E-graph} for short) is a
  digraph (directed graph) \(G=(V,E)\), where \(V\subseteq \bR^n\) is the set of vertices, \(E\) is the set of edges with no self-loops and at most one edge between a pair of ordered vertices.
  Given an edge \((\by,\by')\in E\) we also write \(\by\to \by'\in E\).
  Moreover, the vertices \(\by\), \(\by'\) are called respectively the \emph{source} and the \emph{product} of the edge \(\by\to \by'\).
\end{definition}

A reaction network can be regarded as an E-graph \(G=(V,E)\) where \(E\) is the set of reactions \cite{MR3920470}.
Sometimes, we will refer to an E-graph as \emph{a reaction network}, in order to emphasise its applied side.
To this end, we will also refer to vertices as \emph{complexes} and to edges as \emph{reactions}.
So, the restriction on \(E\) ensures that there is no reaction from a complex to itself (with no self-loops) and there is at most one reaction from one complex to another (at most one edge between a pair of ordered vertices).

\begin{definition}\label{def:system-assosatied-an-embgrph}
  Given an E-graph \(G=(V,E)\) with \(V\subseteq\bR^n\) and \(\mathbf{k}\in \rgz^\vE\), consider the function
  \begin{align}\label{eq:polynSyst}
    F_{G,\mathbf{k}}(\mathbf{x})\coloneqq \sum_{\by\to \by'\in E}k_{\by\to \by'}\mathbf{x}^{\by}(\by'-\by)
  \end{align} where, for \(\by=(\alpha_1,\dots,\alpha_n)\in\bR^n\), \(\mathbf{x}^\by\coloneqq x_1^{\alpha_1}\cdots x_n^{\alpha_n}\).
  The positive real number \(k_{\by\to \by'}\) is a \emph{rate constant} corresponding to the reaction \(\by\to \by'\) and the \emph{dynamical system generated by \(G\) and \(\mathbf{k}\in \rgz^\vE\)} is the following dynamical system.
  \begin{equation}\label{eq:dynSyst}
    \frac{\dd\mathbf{x}}{\dd t} = F_{G,\mathbf{k}}(\mathbf{x}).
  \end{equation}
\end{definition}

Note that, for \(V\subseteq\rgez^n\) \cref{def:system-assosatied-an-embgrph} corresponds to mass-action kinetics (\cite[page 28]{feinberg}).
For \(V\subseteq\mathbb{N}^n\), in the setting of mass-action kinetics, the function \(F_{G,\mathbf{k}}(\mathbf{x})\) gives rise to a \emph{polynomial} dynamical system in (\ref{eq:dynSyst}), which is the case for most practical applications.
Moreover, in this case, the set of source vertices corresponds to the set of monomials appearing in $F_{G,\mathbf{k}}(\mathbf{x})$.

Inspired by \cite{MR2800059} and \cite{cjy}, we give the following definition:
\begin{definition}\label{def:dynamic-inclusion}
  A particular dynamical system
  $$\frac{\dd\mathbf{x}}{\dd t} = f(\mathbf{x})$$
  \emph{has a realization using} an E-graph $G=(V,E)$ if there exists $\mathbf{k}\in\rgz^\vE$ with
  $$F_{G,\mathbf{k}}(\mathbf{x})=f(\mathbf{x}) \mbox{ for all } \mathbf{x}\in \rgez^n.$$
\end{definition}

\begin{definition}\label{def:same-dynamic-locus}
  Given an E-graph $G=(V,E)$ and $\mathbf{k}\in \rgz^E$, the \emph{$G$-equidynamic locus of} $\mathbf{k}$ is the set
\[
  \mathcal{V}(G,\mathbf{k}) := \{\mathbf{k}'\in\rgz^E \mbox{ : } F_{G,\mathbf{k}}(\mathbf{x})=F_{G,\mathbf{k}'}(\mathbf{x}) \mbox{ for all } \mathbf{x}\in\rgz^n\}. 
\] Given a subset $\Omega\subseteq \rgz^E$, the \emph{$G$-dynamic completion} of \(\Omega\) is the set
\[
  \mathcal{V}(G,\Omega) := \bigcup_{\mathbf{k}\in \Omega}\mathcal{V}(G,\mathbf{k}).
\]
\end{definition}
Fix an E-graph $G=(V,E)$. We describe the $G$-equidynamic locus for a $\mathbf{k}\in \rgz^E$.
Let $S=\{\mathbf{y}_1,\dots,\mathbf{y}_l\}$ be the set of sources of $G$ with some fixed order.
Let $E_i=\{\mathbf{y}_i\to \mathbf{y}\in E\}$ be the set of reaction whose source is $\mathbf{y}_i$ and set $n_i$ as the cardinal of $E_i$.
Consider the $n\times n_i$ matrix $A_i$ whose columns are the vectors $\mathbf{y}-\mathbf{y}_i\in E_i$ with some fixed order.
Given $\mathbf{k}\in\rgz^E$, consider the column vector $\mathbf{k}_i=(k_{\mathbf{y}_i\to \mathbf{y}})\in\rgz^{E_i}$ with the same order as the matrix $A_i$.
Now,
\[
  F_{G,\mathbf{k}}(\mathbf{x})= \sum_{i=1}^l \bigl(A_i\mathbf{k}_i\bigr)\mathbf{x}^{\mathbf{y}_i},
\] and it is straightforward to see that $\mathcal{V}(G,\mathbf{k})$ is the polyhedral cone
\[
  \mathcal{V}(G,\mathbf{k}) = \mathbf{k} + \ker A_1\times\dots\times\ker A_l,
\] where we consider $\R^E=\R^{E_1}\times\dots\times \R^{E_l}$ and $A_i$ as a linear map $\R^{E_i} \to \R^n$.

\begin{definition}
  Given an E-graph \(G=(V,E)\) with \(V\subseteq \bR^n\), the \emph{inflow} at the vertex $\byz\in V$ and the state $\bxz\in\rgez^n$ is the number
  \begin{align}
    \sum_{\by\to \byz\in E}k_{\by\to \byz}\bxz^{\by},
  \end{align} and the \emph{outflow} at $\byz$ and $\bxz$ is
  \begin{align}
    \biggl(\sum_{\byz\to \by\in E}k_{\byz\to \by}\biggr)\bxz^{\byz}.
  \end{align}
\end{definition}

In terms of reaction networks the inflow can be interpreted as the total production of the complex \(\byz\) per unit of time when the reaction network is at the state \(\bxz\).
Similarly, the outflow corresponds to the total rate at which the complex $\byz$ is being consumed.

\begin{definition}\cite{MR400923,cy,MR2561288}\label{def:toric-system}
  Given an E-graph \(G=(V,E)\) and \(\mathbf{k}\in\rgz^\vE\), the couple \((G,\mathbf{k})\) \emph{satisfies the complex balanced condition} if there exists \(\bxz\in\rgz^n\) satisfying the equation
  \begin{equation}\label{eq:complexBal}
    \biggl(\sum_{\byz\to \by\in E}k_{\byz\to \by}\biggr)\bxz^{\byz}=\sum_{\by\to \byz\in E}k_{\by\to \byz} \bxz^\by
  \end{equation} for every complex \(\byz\in V\).
  When such an $\bxz$ exists, the dynamical system (\ref{eq:dynSyst}) generated by \((G,\mathbf{k})\) is called a \emph{toric dynamical system} and $\bxz$ is called a \emph{complex balanced steady state} of (\ref{eq:dynSyst}).
\end{definition}

Recall that, when it exists, a complex balanced steady state is a steady state.
In fact, the existence of a single complex balanced steady state implies that every steady state is complex balanced~\cite{MR400923}.
Moreover, the steady states variety of a toric dynamical system is a toric variety~\cite{MR2561288}.

In terms of reaction networks the complex balanced condition has a very clear interpretation.
It is asking for the existence of a state \(\bxz\) for which the inflow and the outflow are equal (or, in other words, are balanced) at every complex.
That is, if a complex balanced steady state \(\bxz\) exists, when the reaction network is at such state the total production of each complex is balanced with its total outflow.

The first to study toric dynamical systems, Horn and Jackson~\cite{MR400923}, called these systems \emph{complex balanced dynamical systems}.
The new terminology has been  introduced in \cite{MR2561288}, where the authors studied these systems from the computational algebraic geometry point of view.
Given an E-graph $G$, they showed that the space of parameters $\mathbf{k}$ for which the couple $(G,\mathbf{k})$ satisfies the complex balance condition is a toric variety.
It is well known that toric ideals (see \cite{bernd}) have particularly nice algebraic and combinatorial properties: \enquote{the world is toric} (\cite[Section 8.3]{BM}).

\begin{remark}\label{rmk:toric-imply-wreaversible1}
  A necessary condition for a dynamical system to be complex balanced is the weak reversibility of the graph that generates it~\cite{MR400923, feinberg}.
\end{remark}

\begin{definition}\cite{cy,feinberg}\label{def:detailed-system}
  Given a reversible E-graph \(G=(V,E)\) and \(\mathbf{k}\in\rgz^\vE\), the couple \((G,\mathbf{k})\) \emph{satisfies the detailed balanced condition} if there exists \(\bxz\in\rgz^n\) that satisfies
  \begin{equation}
    k_{\byz\to \by}\bxz^{\byz}=k_{\by\to \byz} \bxz^\by
  \end{equation} for every reaction $\by\to \byz$ in $E$.
  When such an \(\bxz\) exists, it is called a \emph{detailed balanced steady state}.
\end{definition}

\begin{definition}\cite[Definition 2.7]{cy}\label{def:deficiency}
  Given an E-graph $G$, denote by $n$ the number of vertices, by $l$ the number of its connected components, and by $s$ the dimension of the stoichiometric compatibility class (that is, the vector subspace generated by the edges of $G$). Then the \emph{deficiency} of $G$ is the integer $\delta\coloneqq n-l-s$.
\end{definition}

\section{Disguised Toric Dynamical Systems}\label{sec:disguisedtoric}

In this section we introduce the main objects of study: disguised toric dynamical systems and the disguised toric locus of an E-graph.

\begin{definition}\label{def:disguised-toric-dynamical-system-II}
  Given a dynamical system
  \begin{equation}\label{eq:def:system-in-a-graph}
    \frac{\dd\mathbf{x}}{\dd t}=f(\mathbf{x}) \mbox{ on } \mathbf{x}\in\rgez^n,
  \end{equation}
  we say that it is a \emph{disguised toric dynamical system} if there exist an E-graph $G=(V,E)$ and \(\mathbf{k}\in\rgz^\vE\) such that
  \begin{equation*}
    f(\mathbf{x})=F_{G,\mathbf{k}}(\mathbf{x}) \mbox{ for all } \mathbf{x}\in\rgez^n
  \end{equation*} and the couple \((G,\mathbf{k})\) satisfies the complex balanced condition.
  When~\eqref{eq:def:system-in-a-graph} is a disguised toric dynamical system, we also say that it \emph{has a complex balanced realization using the graph} $G$.
  \medskip

  We say that the dynamical system (\ref{eq:def:system-in-a-graph}) \emph{has a detailed balanced realization} if there exist an E-graph $G=(V,E)$ and \(\mathbf{k}\in\rgz^\vE\) such that
  \[
    f(\mathbf{x})=F_{G,\mathbf{k}}(\mathbf{x}) \mbox{ for all } \mathbf{x}\in\rgez^n
  \] and the couple \((G,\mathbf{k})\) satisfies the detailed balanced condition.
\end{definition}

\begin{definition}\label{not:khat}
  Given an E-graph $G=(V,E)$, we define respectively the \emph{toric locus} and the \emph{disguised toric locus} of \(G\) as the sets
  \[
    K(G)\coloneqq \{\mathbf{k}\in \rgz^\vE\mid \text{ the system generated by } (G,\mathbf{k}) \text{ is toric}\},
  \]
  \[
    \hat{K}(G)\coloneqq \{\mathbf{k}\in\rgz^\vE\mid \text{ the system generated by } (G,\mathbf{k}) \text{ is disguised toric} \}.
  \]
\end{definition}

\subsection{Goals and motivation}
Given an E-graph $G=(V,E)$, we are mostly interested in its toric and disguised toric locus.

A first trivial observation is that for every E-graph, \(K(G)\subseteq \hat{K}(G)\), since every toric dynamical system is disguised toric.

According to \cite[Theorem 9]{MR2561288}, if $G$ is weakly reversible, then $K(G)\subseteq \rgz^\vE$ is a semialgebraic variety of codimension $\delta$, where $\delta$ is the deficiency of $G$ (see \cref{def:deficiency}).

In particular, for every weakly reversible E-graphs with 0 deficiency, \(K(G)=\hat{K}(G)=\rgz^\vE\) (see \cite[Section 7.7]{feinberg}).

Most reaction networks (or E-graphs) coming from practical applications have positive deficiency.
So, for these reaction networks the toric locus is usually of measure zero in the space of rate constants.
That is the main problem with toric dynamical systems, they enjoy extremely pleasant dynamical properties but the chances for a particular dynamical system to be toric are really small.
But from the point of view of dynamical systems, the dynamical systems generated by a \(\mathbf{k}\) in the toric locus \(K(G)\) or in the disguised toric locus \(\hat{K}(G)\) are completely equivalent and, as we will show, $\hat{K}(G)$ is a much larger set in many cases.
For example, $\hat{K}(G)$ may have positive measure even when \(K(G)\) is empty (see \cref{sec:rectange-E-graph}).
\medskip

The disguised toric locus of an E-graph has been indirectly introduced in \cite{cjy}, where the authors used analytical methods to study it.
Our main goal here is, first to establish an explicit definition of disguised toric dynamical systems and disguised toric locus, and second to compute and to find approximations of  $\hat{K}(G)$ by means of algebraic methods.

\section{Triangle on a line}\label{sec:complete3pts}
In this section, we study the E-graph \(G=(V,E)\) given by~\cref{fig:complete3}.
It is the complete graph over the three vertices \(\by_1,\by_2,\by_3\).

This is an example where \(K(G)\subseteq\rgz^\vE\) is a codimension-1 semialgebraic variety, but \(\hat{K}(G)\) is the whole space of rate constants \(\rgz^\vE\).
This example also outlines a procedure to determine \(\hat{K}(G)\) where non explicit complex balanced realization is given.
More precisely, we will show that for every \(\mathbf{k}\in\rgz^\vE\), there exists \(\hat{\mathbf{k}}\in\rgz^\vE\) such that \((G,\hat{\mathbf{k}})\) satisfies the complex balanced condition and the systems generated by \(G\), \(\mathbf{k}\) and \(G\), \(\hat{\mathbf{k}}\) are equal.
That is, this example shows that sometimes the system generated by some rate constants can have a complex balanced realization using \emph{the very same graph} but different rate constants.
In other words, we show that $\mathcal{V}(G,\mathbf{k})\cap K(G)\not=\emptyset$ for all $\mathbf{k}\in \rgz^E$.

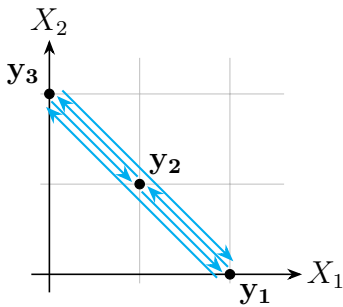
\begin{figure}
  \centering
  \begin{tikzpicture}[scale=1.2]
    \draw[ultra thin,lightgray](-0.1,-0.1) grid (2.6,2.4);
    \draw[-{Stealth},semithick] (-0.2,0) -- (2.8,0) node[right] {$X_1$};
    \draw[-{Stealth},semithick] (0,-0.2) -- (0,2.6) node[above] {$X_2$};
    \node[label=below right:{$\mathbf{y_1}$}](A) at (2,0) {};
    \node[label=above right:{$\mathbf{y_2}$}](B) at (1,1) {};
    \node[label=above left:{$\mathbf{y_3}$}](C) at (0,2) {};
    \draw[fill] (A) circle (1.5pt);
    \draw[fill] (B) circle (1.5pt);
    \draw[fill] (C) circle (1.5pt);

    \def\shf{0.2ex};
    \draw[->,transform canvas={xshift=\shf,yshift=\shf}, thick, cyan] (A) -- (B);
    \draw[->,transform canvas={xshift=-\shf,yshift=-\shf}, thick, cyan] (B) -- (A);
    \draw[->,transform canvas={xshift=\shf,yshift=\shf}, thick, cyan] (B) -- (C);
    \draw[->,transform canvas={xshift=-\shf,yshift=-\shf}, thick, cyan] (C) -- (B);

    \def\shf{0.6ex};
    \draw[->,transform canvas={xshift=\shf,yshift=\shf},thick, cyan] (C) -- (A);
    \draw[->,transform canvas={xshift=-\shf,yshift=-\shf},thick, cyan] (A) -- (C);
  \end{tikzpicture}
  \caption{Triangle on a line.}
  \label{fig:complete3}
\end{figure}
Given a vector of rate constants \(\mathbf{k}\in\rgz^\vE\), we simplify the notation \(k_{\by_i\to\by_j}\) to \(k_{ij}\).

Following the combinatorial scheme established in \cite{MR2561288}, the equation on \(k_{ij}\) for the toric locus \(K(G)\subseteq\rgz^\vE\) is
\begin{equation}\label{eq:paper2009}
  K_1K_3-K_2^2,
\end{equation} where \(K_i\) are the maximal minors of the negative of the Laplacian of the graph \(G\), which can be computed by means of the matrix-tree theorem.
Namely,
\begin{align*}
  K_1\coloneqq k_{21} k_{31}+k_{32} k_{21}+k_{23} k_{31};\\
  K_2\coloneqq k_{13} k_{23}+k_{21} k_{13}+k_{12} k_{23};\\
  K_3\coloneqq k_{12} k_{32}+k_{13} k_{32}+k_{31} k_{12}.
\end{align*}
Observe that equation (\ref{eq:paper2009}) defines a \emph{toric} variety in \(\bP^2\). This is a general fact for E-graphs proved in \cite{MR2561288}.
In fact, this toric variety is the rational normal curve in \(\bP^2\), and this is a general fact for strongly connected E-graphs contained in a hyperplane \(X_1+\dots+X_n=N\), see \cite[Proposition 5.2.1.]{shiuTh}

\begin{theorem}\label{prop:all}
  For the E-graph \(G=(V,E)\) given by~\cref{fig:complete3}, the disguised toric locus \(\hat{K}(G)\) is the whole space of rate constants \(\rgz^\vE\).
\end{theorem}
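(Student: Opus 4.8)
The plan is to prove the sharper statement advertised in the text, namely that $\mathcal{V}(G,\mathbf{k})\cap K(G)\neq\emptyset$ for every $\mathbf{k}\in\rgz^\vE$. Rather than hit the complex balanced hypersurface $K_1K_3-K_2^2=0$ directly, I would produce a \emph{detailed} balanced realization on the very same graph $G$, since detailed balanced systems are complex balanced; this keeps the algebra one degree simpler.

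The first step is to exploit the collinearity of $\mathbf{y}_1=(2,0)$, $\mathbf{y}_2=(1,1)$, $\mathbf{y}_3=(0,2)$. Every reaction vector $\mathbf{y}_j-\mathbf{y}_i$ is a multiple of $(1,-1)$, so each matrix $A_i$ has rank one and, grouping the terms by source,
\[
  F_{G,\mathbf{k}}(\mathbf{x}) = \bigl(-p\,x_1^2 + q\,x_1x_2 + r\,x_2^2\bigr)\,(1,-1),
\]
where $p=k_{12}+2k_{13}$, $q=k_{21}-k_{23}$, $r=2k_{31}+k_{32}$. Hence the generated system depends only on the triple $(p,q,r)$, and two rate vectors are equidynamic exactly when they share this triple. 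As $\mathbf{k}$ ranges over $\rgz^\vE$ one gets precisely $p>0$, $r>0$ and $q\in\bR$ arbitrary, so it suffices to realize each such triple by a detailed balanced $\hat{\mathbf{k}}$.

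The second step fixes a target $(p,q,r)$ and searches within $\mathcal{V}(G,\mathbf{k})$. Writing $\rho=x_2/x_1>0$ for the coordinate ratio at a putative detailed balanced steady state, the detailed balanced condition forces $k_{12}=\rho k_{21}$, $k_{23}=\rho k_{32}$, $k_{13}=\rho^2 k_{31}$. Substituting these into $(p,q,r)$ and eliminating, the key computation is that the dependence on $k_{31}$ cancels, leaving the single scalar constraint
\[
  q=\frac{p}{\rho}-\rho r,\qquad\text{equivalently}\qquad r\rho^2+q\rho-p=0.
\]
Because $p,r>0$, this quadratic has discriminant $q^2+4pr>0$ and product of roots $-p/r<0$, hence exactly one positive root $\rho^\ast$, valid for any sign of $q$.

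Finally, with $\rho=\rho^\ast$ fixed, I would choose the remaining free parameter $k_{31}$ in the nonempty interval $0<k_{31}<\min\{r/2,\;p/(2\rho^{\ast 2})\}$, which renders all six $\hat k_{ij}$ strictly positive. The resulting $\hat{\mathbf{k}}$ is detailed balanced, hence complex balanced, and carries the same $(p,q,r)$ as $\mathbf{k}$, so $\hat{\mathbf{k}}\in\mathcal{V}(G,\mathbf{k})\cap K(G)$; since $\mathbf{k}$ was arbitrary, $\hat{K}(G)=\rgz^\vE$. The only genuine obstacle is conceptual rather than computational: one must recognize that detailed balance (not merely complex balance) is the right target and pick the parametrization that makes the $k_{31}$-dependence disappear. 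After that reduction, the positive-root argument and the positivity bookkeeping are elementary.
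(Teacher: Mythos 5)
Your proposal is correct, but it takes a genuinely different route from the paper. The paper first collapses $G$ to the directed cycle $G^{*}$ of Figure~\ref{fig:cycle-trigle-line}, then re-realizes the system on $G$ via a three-parameter family $\hat{\mathbf{k}}(a,b,c)$, pulls back the toric-locus polynomial $K_1K_3-K_2^2$ to a function $\varphi(a,b,c)$, and invokes the intermediate value theorem after checking that $\varphi$ takes both signs in the positive octant; this is deliberately set up as an instance of the pullback-and-eliminate strategy formalized later in Algorithm~\ref{algorithmDisguised1}. You instead aim directly at a \emph{detailed} balanced realization on $G$ itself: the reduction of the dynamics to the triple $(p,q,r)=(k_{12}+2k_{13},\,k_{21}-k_{23},\,2k_{31}+k_{32})$ is exactly right, the detailed-balance ansatz $k_{12}=\rho k_{21}$, $k_{23}=\rho k_{32}$, $k_{13}=\rho^2 k_{31}$ does make the $k_{31}$-dependence cancel and leaves the single quadratic $r\rho^2+q\rho-p=0$, which has a unique positive root since $p,r>0$, and your interval $0<k_{31}<\min\{r/2,\,p/(2\rho^{*2})\}$ does render all six rates positive while the identity $q=p/\rho^{*}-\rho^{*}r$ guarantees the middle coefficient matches automatically. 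What your approach buys is explicitness: a closed-form positive root and a concrete one-parameter family of complex balanced realizations in $\mathcal{V}(G,\mathbf{k})\cap K(G)$, with no appeal to the intermediate value theorem or to the degree-four pullback polynomial~\eqref{eq:subst}. It is in fact closer in spirit to the detailed-balance arguments the paper deploys for the quadrilateral (Theorem~\ref{thr:quadrilater-line-main}) and the $N$-gon (Theorem~\ref{thr:cycles-are-done}) than to the paper's own proof of this theorem; what the paper's less explicit route buys is that it illustrates the general algorithmic template and transfers with essentially no change to the weakly reversible deficiency-one variants mentioned in the remark following the corollary.
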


Before proving~\cref{prop:all} we reduce the E-graph \(G\) to have only one reaction per source. Observe that given \(\mathbf{k}\in\rgz^\vE\) we may realize the system generated by \((G,\mathbf{k})\) by a cycle directed graph over \(\by_1,\by_2,\by_3\) (see~\cref{fig:cycle-trigle-line}).
Indeed, consider the vectors
\[
  \mathbf{u}_i \coloneqq  \sum_{\by_i\to \by_j\in E}k_{ij}(\by_j-\by_i).
\]
Since \(k_{12}(\by_2-\by_1)\) and \(k_{13}(\by_3-\by_1)\) are positively proportional to \(\begin{psmallmatrix*}[r] -2 \\ 2 \end{psmallmatrix*}\), so is the vector \(\mathbf{u}_1\).
Let us denote by \(k^{*}_1\in\rgz\) the proportional factor, that is
\[
  \mathbf{u}_1= k^{*}_1\begin{pmatrix*}[r] -2 \\ 2 \end{pmatrix*}.
\] Similarly, let us denote by \(k^{*}_3\in\rgz\) the proportional factor between \(\mathbf{u}_3\) and \(\begin{psmallmatrix*}[r] 1 \\ -1 \end{psmallmatrix*}\).

The situation for the vector \(\mathbf{u}_2\) is slightly different.
Let us assume it is different from zero.
Depending on the values of \(k_{2j}\), it will be \emph{positively} proportional to \(\begin{psmallmatrix*}[r] -1 \\ 1 \end{psmallmatrix*}\) or to \(\begin{psmallmatrix*}[r] 1 \\ -1 \end{psmallmatrix*}\).
We assume that \(\mathbf{u}_2\) is positively proportional to the later, the other case simply corresponds to a permutation of the formal variables \(X_1\), \(X_2\).
So, let us denote by  \(k^{*}_2\in\rgz\) the proportional factor.

Now, consider the E-graph \(G^{*}=(V,E^{*})\) given by~\cref{fig:cycle-trigle-line}.
Finally, by construction, the dynamical systems generated by \(G\), \(\mathbf{k}\) and by \(G^{*}\), \(\mathbf{k}^{*}=(k^{*}_1,k^{*}_2,k^{*}_3)\in\rgz^{E^{*}}\) are equal.

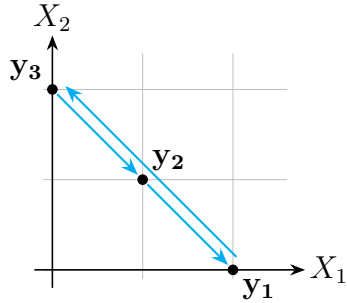
\begin{figure}[H]
  \centering
  \begin{tikzpicture}[scale=1.2]
    \draw[ultra thin,lightgray](-0.1,-0.1) grid (2.6,2.4);
    \draw[-{Stealth},semithick] (-0.2,0) -- (2.8,0) node[right] {$X_1$};
    \draw[-{Stealth},semithick] (0,-0.2) -- (0,2.6) node[above] {$X_2$};
    \node[label=below right:{$\mathbf{y_1}$}](A) at (2,0) {};
    \node[label=above right:{$\mathbf{y_2}$}](B) at (1,1) {};
    \node[label=above left:{$\mathbf{y_3}$}](C) at (0,2) {};
    \draw[fill] (A) circle (1.5pt);
    \draw[fill] (B) circle (1.5pt);
    \draw[fill] (C) circle (1.5pt);
    \def\shf{0.6ex};
    \draw[->,transform canvas={xshift=\shf,yshift=\shf},thick, cyan] (A) -- (C);
    \def\shf{0.2ex};
    \draw[->, thick, cyan] (C) -- (B);
    \draw[->, thick, cyan] (B) -- (A);
  \end{tikzpicture}
  \caption{Cycle on three vertices $G^*$.}
  \label{fig:cycle-trigle-line}
\end{figure}

\begin{proof}[Proof of~\cref{prop:all}]
  Clearly, we just need to prove that the disguised toric locus \(\hat{K}(G^{*})\) is the whole \(\rgz^{E^{*}}=\rgz^3\).

  Now, we will come back to the E-graph \(G\).
  That is, we realize the system generated by \(G^{*}\), \(\mathbf{k}^{*}\) using the graph \(G\).
  Fix \(\mathbf{k}^{*}\) and consider the E-graph \(\hat{G}=G\) and the rate constants \(\hat{\mathbf{k}}\) given by
  \begin{align}\label{eq:embedding3vertices}
    \begin{aligned}
      \hat{k}_{32} & \coloneqq \frac{1}{1+a}k_3^*\\
      \hat{k}_{31} & \coloneqq  \frac{a}{2(1+a)}k_3^*
    \end{aligned} & &
                      \begin{aligned}
                        \hat{k}_{21} & \coloneqq k_2^*+b\\
                        \hat{k}_{23} & \coloneqq  b\\
                      \end{aligned} & &
                                        \begin{aligned}
                                          \hat{k}_{12} & \coloneqq  2\frac{c}{1+c}k_1^*\\
                                          \hat{k}_{13} & \coloneqq \frac{1}{1+c}k_1^*
                                        \end{aligned}
  \end{align}
  where $a,b,c>0$.
  Equations~(\ref{eq:embedding3vertices}) are chosen so that the dynamical systems generated by \(G^{*}\), \(\mathbf{k}^{*}\) and \(\hat{G}\), \(\hat{\mathbf{k}}\) are equal for all \(a,b,c>0\).

  The toric locus for \(\hat{G}\) is given by equation~\eqref{eq:paper2009} substituting \(k_{ij}\) by \(\hat{k}_{ij}\).
  So, the pullback of the equation defining \(K(\hat{G})\subseteq\rgz^{\hat{E}}\) by~\eqref{eq:embedding3vertices} is the following function \(\varphi(a,b,c)\) on \(a,b,c>0\):
  \begin{align}\label{eq:subst}
    \begin{split}
      \left(\frac{k_{3}^* (b+k_{2}^*)}{a+1}+\frac{(a k_{3}^*) (b+k_{2}^*)}{2
          (a+1)}+\frac{b (a k_{3}^*)}{2 (a+1)}\right) \left(\frac{k_{1}^*
          (b+k_{2}^*)}{c+1}+\frac{b k_{1}^*}{c+1}+\frac{b (2 c
          k_{1}^*)}{c+1}\right)-\\ -\left(\frac{k_{1}^* k_{3}^*}{(a+1)
          (c+1)}+\frac{k_{3}^* (2 c k_{1}^*)}{(a+1) (c+1)}+\frac{(a k_{3}^*) (2 c
          k_{1}^*)}{(2 (a+1)) (c+1)}\right)^2.
    \end{split}
  \end{align}
  Now, if there exists \(a_0,b_0,c_0>0\) such that \(\varphi(a_0,b_0,c_0)=0\), then~\eqref{eq:embedding3vertices} for \(a_0,b_0,c_0\) will give a complex balanced realization of the system generated by \(G^{*}\), \(\mathbf{k}^{*}\) using the graph \(\hat{G}\).
  So to finish, we show that such \(a_0,b_0,c_0\) always exist regardless of \(\mathbf{k}^{*}\).

  First observe that, taking \(b_1\) large enough (tending to infinity), there exist real numbers $(a_1,b_1,c_1)$ with $a_1,b_1,c_1>0$ such that $\varphi(a_1,b_1,c_1)>0$ (the first term of \(\varphi\) grows to infinity and the second term is bounded).

  Second, taking \(b_2\) small enough (tending to zero) and $c_2$ large enough (tending to infinity), there exist $(a_2,b_2,c_2)$ with $a_2,b_2,c_2>0$ such that $\varphi(a_2,b_2,c_2)<0.$

  Hence, by the intermediate value theorem, there exist $(a,b,c)$ such that $\varphi(a,b,c)=0$, since the function $\varphi$ is continuous and its domain is connected.
\end{proof}

\begin{corollary}\label{coro:triagle-endotactic}
  For every endotactic (see \cite{CNP} for the definition) E-graph \(G'=(V',E')\) with the same three source vertices \(\by_1,\by_2,\by_3\) and for every \(\mathbf{k}'\in\rgz^{E'}\), the generated dynamical system is disguised toric.
\end{corollary}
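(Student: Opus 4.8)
The plan is to deduce \cref{coro:triagle-endotactic} directly from \cref{prop:all}. Because the sources of $G'$ are exactly $\by_1,\by_2,\by_3$, the generated system is determined by the three net reaction vectors
\[
  \mathbf{u}_i' \coloneqq \sum_{\by_i\to\by'\in E'} k'_{\by_i\to\by'}\,(\by'-\by_i),
  \qquad i=1,2,3,
\]
via $F_{G',\mathbf{k}'}(\bx)=\sum_{i=1}^{3}\mathbf{u}_i'\,\bx^{\by_i}$. My goal is to show that this very right-hand side is realizable by the triangle $G$ of \cref{fig:complete3} with \emph{positive} rate constants; then \cref{prop:all}, which states $\hat{K}(G)=\rgz^\vE$, immediately yields that the system is disguised toric.

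First I would use the endotactic hypothesis to locate the $\mathbf{u}_i'$. The three sources lie on the line $X_1+X_2=2$, so $\langle(1,1),\by_i\rangle=2$ for every $i$. Testing endotacticity against $w=(1,1)$ and against $w=-(1,1)$ forces every product of $G'$ onto this same line: a reaction leaving the line is $w$-essential for both choices, and---since all sources share the coordinate $\langle(1,1),\cdot\rangle=2$---its source is a minimizer in both cases, so its signed progress would be forced positive by one choice of $w$ and negative by the other. Hence every reaction vector of $G'$, and in particular each $\mathbf{u}_i'$, is parallel to $(1,-1)$; write $\mathbf{u}_i'=\mu_i\,(1,-1)$ with $\mu_i\in\R$.

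Next I would pin down the signs. Since all reaction vectors are orthogonal to $(1,1)$ and all sources have equal $(1,1)$-coordinate, replacing a test vector $w$ by $w+t(1,1)$ changes neither $w$-essentiality nor the order of the sources, so it suffices to test $w=(1,-1)$ and $w=(-1,1)$. For $w=(-1,1)$ the unique minimizing source is $\by_1$, so every reaction out of $\by_1$ must satisfy $\langle(-1,1),\by'-\by_1\rangle>0$, i.e.\ point along $(-1,1)$; as $\by_1$ is a genuine source and the rate constants are positive, $\mu_1<0$ strictly. Symmetrically $w=(1,-1)$ makes $\by_3$ the minimizer and gives $\mu_3>0$ strictly, while the middle source $\by_2$ is never a minimizer, so $\mu_2\in\R$ is unconstrained. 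Matching to the triangle, whose outgoing edges give $\mathbf{u}_1=(k_{12}+2k_{13})(-1,1)$, $\mathbf{u}_3=(2k_{31}+k_{32})(1,-1)$ and $\mathbf{u}_2=(k_{21}-k_{23})(1,-1)$, reduces to solving $k_{12}+2k_{13}=-\mu_1$, $2k_{31}+k_{32}=\mu_3$ and $k_{21}-k_{23}=\mu_2$ in positive unknowns. The strict inequalities $-\mu_1>0$ and $\mu_3>0$ make the first two solvable, and any $\mu_2\in\R$ (including $\mu_2=0$, via $k_{21}=k_{23}$) makes the third solvable. Thus $F_{G',\mathbf{k}'}=F_{G,\mathbf{k}}$ for some $\mathbf{k}\in\rgz^\vE$, and \cref{prop:all} concludes.

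The main obstacle is the middle step: turning the combinatorial endotactic definition into the geometric statement that $\mathbf{u}_1'$ and $\mathbf{u}_3'$ sit in the correct open rays while $\mathbf{u}_2'$ is free. Two features make this manageable: collinearity of the sources both confines all reactions to the line and collapses the test directions to $w=\pm(1,-1)$, and the strict inequality built into endotacticity upgrades ``non-negative multiple'' to ``strictly positive multiple'', which is exactly what is needed to realize the system with positive rate constants. The degenerate case $\mu_2=0$ causes no trouble, since $\hat{K}(G)=\rgz^\vE$ already contains the locus $k_{21}=k_{23}$.
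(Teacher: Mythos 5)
Your proof is correct and follows essentially the same route as the paper's: realize the system generated by \((G',\mathbf{k}')\) using a graph on the three collinear sources already covered by \cref{prop:all} (you use the full triangle \(G\) with positive rates, where the paper uses the cycle \(G^{*}\)), then conclude. The only substantive difference is that you spell out the endotacticity argument pinning down the directions of \(\mathbf{u}_1'\) and \(\mathbf{u}_3'\) and leaving \(\mathbf{u}_2'\) free — a verification the paper's one-line proof leaves implicit — and your choice of the complete triangle neatly avoids the sign case distinction on \(\mathbf{u}_2'\).
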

\begin{proof}
  We can always realize the system generated by \(G'\), \(\mathbf{k}'\) using the E-graph \(G^{*}\) given by Figure~\ref{fig:cycle-trigle-line}.
  That is, for every $\mathbf{k}'\in\rgz^{E'}$, there are $\mathbf{k}^{*}\in\rgz^{E^{*}}$ (which we have proved that are disguised toric) generating the same system.
\end{proof}

\begin{remark}
  The same result (with essentially the same proof) remains true for  every weakly reversible E-graph with three vertices and deficiency one.
  More generally, the same is true for every endotactic network with the three source vertices and all vertices on a line.
\end{remark}

\section{Quadrilateral on a line}\label{sec:quadrilat}
In this section, we completely determine the dynamics of the systems generated by the E-graph \(G=(V,E)\) given by~\cref{fig:4thGhat}.
It is the complete graph over the four vertices \(\by_1,\by_2,\by_3,\by_4\).

This example outlines a procedure to find sufficient semialgebraic conditions on \(\mathbf{k}\in\rgz^\vE\) for being in \(\hat{K}(G)\).
The procedure could be described as follows.
For a given \(\mathbf{k}\in\rgz^\vE\), realize the dynamical system generated by \((G,\mathbf{k})\) using an E-graph \(\hat{G}=(\hat{V},\hat{E})\) where the detailed balance condition can be established.
Then, pullback to \(\mathbf{k}\in\rgz^\vE\) the equations of the detailed balance condition on \(\hat{\mathbf{k}}\in\rgz^{\hat{E}}\).
So, the obtained semialgebraic set will be contained in \(\hat{K}(G)\), since detailed balance dynamical systems are toric.

In fact, the previous procedure completely determines the disguised toric locus \(\hat{K}(G)\) in many cases, for this example and in~\cref{sec:the-n-gon-in-a-line} for all the so called single-sign-change chambers.
Nevertheless, in~\cref{sec:rectange-E-graph}, we will show that it may fail.

\begin{figure}
  \centering
  \begin{tikzpicture}[scale=1.2]
    \draw[ultra thin,lightgray](-0.1,-0.1) grid (3.6,3.4);
    \draw[-{Stealth},semithick] (-0.2,0) -- (3.8,0) node[right] {$X_1$};
    \draw[-{Stealth},semithick] (0,-0.2) -- (0,3.6) node[above] {$X_2$};
    \node[label=below right:{$\mathbf{y_1}$}](A) at (3,0) {};
    \node[label=above right:{$\mathbf{y_2}$}](B) at (2,1) {};
    \node[label=above right:{$\mathbf{y_3}$}](C) at (1,2) {};
    \node[label=above left:{$\mathbf{y_4}$}](D) at (0,3) {};
    \draw[fill] (A) circle (1.5pt);
    \draw[fill] (B) circle (1.5pt);
    \draw[fill] (C) circle (1.5pt);
    \draw[fill] (D) circle (1.5pt);
    \draw[<->,thick,cyan] (A) -- (B);
    \draw[<->,thick,cyan] (B) -- (C);
    \draw[<->,thick,cyan] (C) -- (D);
    \def\shf{0.5ex};
    \draw[<->,transform canvas={xshift=\shf,yshift=\shf},thick, cyan] (A) -- (C);
    \draw[<->,transform canvas={xshift=-\shf,yshift=-\shf},thick, cyan] (B) -- (D);
    \def\shf{0.9ex};
    \draw[<->,transform canvas={xshift=\shf,yshift=\shf},thick, cyan] (A) -- (D);
  \end{tikzpicture}
  \caption{Complete directed graph on 4 vertices.}
  \label{fig:4thGhat}
\end{figure}
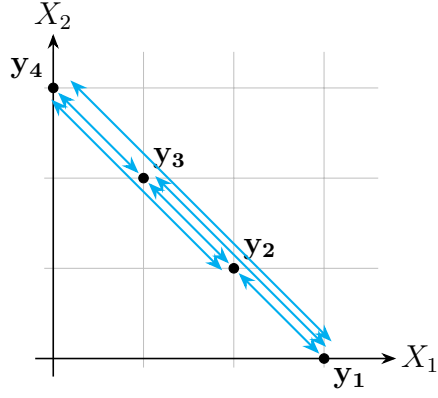

Similarly to~\cref{sec:complete3pts}, we start reducing the E-graph \(G\) to an E-graph \(G^{*}=(V,E^{*})\) with one reaction per vertex.
We do it systematically and aiming that the procedure will be easy to extrapolate to the more general E-graph of \cref{sec:the-n-gon-in-a-line}, the N-gone on a line.

For a given vector of rate constants \(\mathbf{k}\in\rgz^\vE\), let us simplify \(k_{\by_i\to\by_j}\) to \(k_{ij}\) and  consider the vectors
\[
  \mathbf{u}_i \coloneqq  \sum_{\by_i\to \by_j\in E}k_{ij}(\by_j-\by_i).
\] The system generated by \(G\), \(\mathbf{k}\) is
\begin{align*}\label{eq:Complete4_red}
  \begin{split}
    \frac{d}{dt} \begin{pmatrix*}[r]  x_1 \\ x_2 \end{pmatrix*} = &
    \,\,\mathbf{u}_1x_1^3+\mathbf{u}_2x_1^2x_2+\mathbf{u}_3x_1x_2^2+\mathbf{u}_4x_2^3 = \\
    = &\,\, (k_{12}+2k_{13}+3k_{14})\begin{psmallmatrix*}[r]  -1 \\ 1 \end{psmallmatrix*}x_1^3 +\\
    &+(k_{21}-k_{23}-2k_{24})\begin{psmallmatrix*}[r]  1 \\ -1 \end{psmallmatrix*}x_1^2 x_2+\\
    &+(2k_{31}+k_{32}-k_{34})\begin{psmallmatrix*}[r]  1 \\ -1 \end{psmallmatrix*}x_1 x_2^2+\\
    &+(3k_{41}+k_{43}+2k_{42})\begin{psmallmatrix*}[r]  1 \\ -1 \end{psmallmatrix*}x_2^3.
  \end{split}
\end{align*}

The vectors \(\mathbf{u}_1\), \(\mathbf{u}_4\) are respectively \emph{positively} proportional to \(\begin{psmallmatrix*}[r] -1 \\ 1 \end{psmallmatrix*}\) and \(\begin{psmallmatrix*}[r] 1 \\ -1 \end{psmallmatrix*}\).
Let us denote by \(k_1^{*},k_4^{*}\in\rgz\) the respective proportional factors, namely
\begin{align*}
  k_1^{*} & \coloneqq  k_{12}+2k_{13}+3k_{14};\\
  k_4^{*} & \coloneqq  3k_{41}+2k_{42}+k_{43}.
\end{align*}
Again, for the vectors \(\mathbf{u}_2\), \(\mathbf{u}_3\) the situation is slightly different, since their direction depends on the particular values of \(\mathbf{k}\in\rgz^\vE\).
Both vectors are always positively proportional to either \(\begin{psmallmatrix*}[r] -1 \\ 1 \end{psmallmatrix*}\) or \(\begin{psmallmatrix*}[r] 1 \\ -1 \end{psmallmatrix*}\).
We denote respectively the positive proportional factor for each case by \(k_2^{*}\) and \(k_3^{*}\), namely
\[
  k_2^{*}\coloneqq
  \begin{cases}
    k_{21}-k_{23}-2k_{24} & \mbox{ if } k_{21}-k_{23}-2k_{24}>0 \\
    -k_{21}+k_{23}+2k_{24} & \mbox{ otherwise }
  \end{cases}
\]
\[
  k_3^{*}\coloneqq
  \begin{cases}
    2k_{31}+k_{32}-k_{34} & \mbox{ if } 2k_{31}+k_{32}-k_{34}>0 \\
    -2k_{31}-k_{32}+k_{34} & \mbox{ otherwise. }
  \end{cases}
\]
So now, we have to distinguish four possible cases in order to realize the system generated by \(G\), \(\mathbf{k}\) by an E-graph with a reaction per source.
This cases are summarised in~\cref{th:4gon} below.

\begin{proposition}\label{th:4gon}
  Consider the E-graph \(G=(V,E)\) given by~\cref{fig:4thGhat}.
  Consider a vector of rate constants \(\mathbf{k}\in\rgz^\vE\) and consider \(\mathbf{k}^{*}=(k_1^{*},\dots,k_4^{*})\) defined above.
  The dynamical system generated by \(G\), \(\mathbf{k}\) is equal to the system generated by \(\mathbf{k}^{*}\) and
  \begin{enumerate}
    \item the E-graph given by (A) in~\cref{fig:chamb1234} if
          \[
          \begin{cases}
            k_{21}-k_{23}-2k_{24}\le 0; \\
            2k_{31}+k_{32}-k_{34}\le 0;
          \end{cases}
          \]
    \item the E-graph given by (B) in~\cref{fig:chamb1234} if
          \[
          \begin{cases}
            k_{21}-k_{23}-2k_{24}\ge 0;\\
            2k_{31}+k_{32}-k_{34}\le 0;
          \end{cases}
          \]
    \item the E-graph given by (C) in~\cref{fig:chamb1234} if
          \[
          \begin{cases}
            k_{21}-k_{23}-2k_{24}\ge 0;\\
            2k_{31}+k_{32}-k_{34}\ge 0;
          \end{cases}
          \]
    \item the E-graph given by (D) in~\cref{fig:chamb1234} if
          \[
          \begin{cases}
            k_{21}-k_{23}-2k_{24}\le 0;\\
            2k_{31}+k_{32}-k_{34}\ge 0.
          \end{cases}
          \]
  \end{enumerate}
\end{proposition}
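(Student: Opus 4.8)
The plan is to reduce the statement to a direct computation of the four reaction vectors $\mathbf{u}_1,\dots,\mathbf{u}_4$ followed by a case split on the two sign conditions. The crucial structural fact is that all four vertices lie on the line $X_1+X_2=3$, so every edge vector $\by_j-\by_i$ is an integer multiple of the primitive direction $\begin{psmallmatrix*}[r]-1\\1\end{psmallmatrix*}$. Hence each $\mathbf{u}_i=\sum_{\by_i\to\by_j\in E}k_{ij}(\by_j-\by_i)$ is automatically a scalar multiple of $\begin{psmallmatrix*}[r]1\\-1\end{psmallmatrix*}$, with the coefficients already extracted in the displayed computation preceding the statement: $\mathbf{u}_1=k_1^{*}\begin{psmallmatrix*}[r]-1\\1\end{psmallmatrix*}$, $\mathbf{u}_4=k_4^{*}\begin{psmallmatrix*}[r]1\\-1\end{psmallmatrix*}$, $\mathbf{u}_2=(k_{21}-k_{23}-2k_{24})\begin{psmallmatrix*}[r]1\\-1\end{psmallmatrix*}$ and $\mathbf{u}_3=(2k_{31}+k_{32}-k_{34})\begin{psmallmatrix*}[r]1\\-1\end{psmallmatrix*}$.

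First I would record the sign behaviour. Since $\by_1$ and $\by_4$ are the extreme vertices of the chain, $k_1^{*}=k_{12}+2k_{13}+3k_{14}$ and $k_4^{*}=3k_{41}+2k_{42}+k_{43}$ are sums of strictly positive terms; thus $\mathbf{u}_1$ always points along $\begin{psmallmatrix*}[r]-1\\1\end{psmallmatrix*}$ and $\mathbf{u}_4$ always in the opposite sense. The interior vertices $\by_2,\by_3$ possess neighbours on both sides, so the linear forms $k_{21}-k_{23}-2k_{24}$ and $2k_{31}+k_{32}-k_{34}$ may take either sign, and their two signs partition $\rgz^\vE$ into the four chambers appearing in the statement.

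The core of the argument is the realisation, performed uniformly across the chambers. For each source $\by_i$ I would pick the unique \emph{primitive} neighbour $\by_{j(i)}$ lying in the direction of $\mathbf{u}_i$, so that $\by_{j(i)}-\by_i=\pm\begin{psmallmatrix*}[r]-1\\1\end{psmallmatrix*}$ and the single reaction $\by_i\to\by_{j(i)}$ with rate $k_i^{*}$ reproduces the term $\mathbf{u}_i\,\mathbf{x}^{\by_i}$ exactly. The edges $\by_1\to\by_2$ and $\by_4\to\by_3$ are forced; when $k_{21}-k_{23}-2k_{24}\ge0$ one takes $\by_2\to\by_1$ and otherwise $\by_2\to\by_3$, while when $2k_{31}+k_{32}-k_{34}\ge0$ one takes $\by_3\to\by_2$ and otherwise $\by_3\to\by_4$. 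The resulting E-graph $G^{*}$ is precisely the one drawn in (A)--(D) of~\cref{fig:chamb1234}, and by construction
\[
  F_{G^{*},\mathbf{k}^{*}}(\mathbf{x})=\sum_{i=1}^{4}\mathbf{u}_i\,\mathbf{x}^{\by_i}=F_{G,\mathbf{k}}(\mathbf{x}),
\]
which is the asserted equality of generated systems.

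I do not anticipate a genuine obstacle, since the statement is essentially normalising $G$ to one reaction per source; the only point that needs care is the behaviour on the chamber walls. When one of the two linear forms vanishes, the corresponding $\mathbf{u}_2$ or $\mathbf{u}_3$ is zero, so that source carries no reaction in $G^{*}$, which is exactly why the inequalities are stated non-strictly and why neighbouring chambers agree along their shared boundary. It is worth emphasising that each $G^{*}$ has a single reaction per source by construction: this is the normal form on which the detailed-balance analysis of~\cref{thr:quadrilater-line-main} rests, and the uniform recipe above is what allows the extrapolation to the $N$-gon in~\cref{sec:the-n-gon-in-a-line}.
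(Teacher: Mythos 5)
Your argument is correct and is essentially the paper's own: the proposition is proved by the displayed computation preceding it, which decomposes $F_{G,\mathbf{k}}$ as $\sum_i\mathbf{u}_i\mathbf{x}^{\by_i}$, observes that $\mathbf{u}_1,\mathbf{u}_4$ have fixed direction while the signs of $k_{21}-k_{23}-2k_{24}$ and $2k_{31}+k_{32}-k_{34}$ determine the directions of $\mathbf{u}_2,\mathbf{u}_3$, and then reads off the one-reaction-per-source graph in each of the four cases. Your explicit handling of the chamber walls (where $\mathbf{u}_2$ or $\mathbf{u}_3$ vanishes and the corresponding source carries no reaction) is a small point the paper leaves implicit, but it does not change the argument.
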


\def\scopesp{7.5}
\def\dirlength{0.3cm}
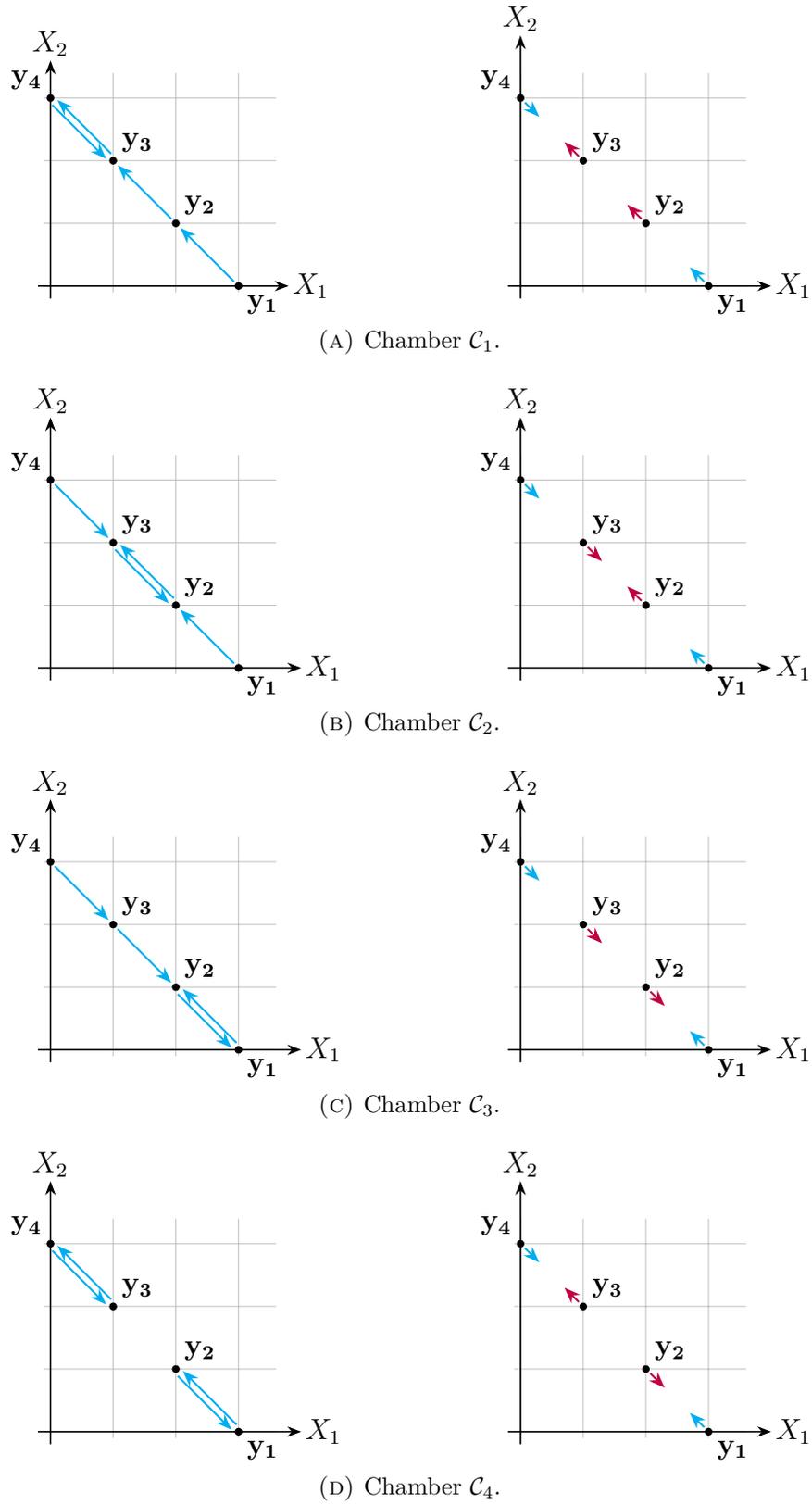
\begin{figure}
  \subfloat[Chamber $\mathcal{C}_1$.]{
    \begin{tikzpicture}[scale=0.9]
      \draw[ultra thin,lightgray] (-0.1,-0.1) grid (3.6,3.4);
      \draw[-{Stealth},semithick] (-0.2,0) -- (3.8,0) node[right] {$X_1$};
      \draw[-{Stealth},semithick] (0,-0.2) -- (0,3.6) node[above] {$X_2$};
      \node[label=below right:{$\mathbf{y_1}$}](A) at (3,0) {};
      \node[label=above right:{$\mathbf{y_2}$}](B) at (2,1) {};
      \node[label=above right:{$\mathbf{y_3}$}](C) at (1,2) {};
      \node[label=above left:{$\mathbf{y_4}$}](D) at (0,3) {};
      \draw[fill] (A) circle (1.5pt);
      \draw[fill] (B) circle (1.5pt);
      \draw[fill] (C) circle (1.5pt);
      \draw[fill] (D) circle (1.5pt);
      \draw[->,thick,cyan] (A) -- (B);
      \draw[->,thick,cyan] (B) -- (C);
      \def\shf{0.2ex};
      \draw[->,transform canvas={xshift=\shf,yshift=\shf},thick, cyan] (C) -- (D);
      \draw[<-,transform canvas={xshift=-\shf,yshift=-\shf},thick, cyan] (C) -- (D);
      \begin{scope}[shift={(\scopesp,0)}]
        \draw[ultra thin,lightgray] (-0.1,-0.1) grid (3.6,3.4);
        \draw[-{Stealth},semithick] (-0.2,0) -- (4,0) node[right] {$X_1$};
        \draw[-{Stealth},semithick] (0,-0.2) -- (0,4) node[above] {$X_2$};
        \node[label=below right:{$\mathbf{y_1}$}](A) at (3,0) {};
        \node[label=above right:{$\mathbf{y_2}$}](B) at (2,1) {};
        \node[label=above right:{$\mathbf{y_3}$}](C) at (1,2) {};
        \node[label=above left:{$\mathbf{y_4}$}](D) at (0,3) {};
        \draw[fill] (A) circle (1.5pt);
        \draw[fill] (B) circle (1.5pt);
        \draw[fill] (C) circle (1.5pt);
        \draw[fill] (D) circle (1.5pt);
        \draw[->,thick,cyan] (A) -- +(-\dirlength, \dirlength);
        \draw[->,thick,purple] (B) -- +(-\dirlength, \dirlength);
        \draw[->,thick,purple] (C) -- +(-\dirlength, \dirlength);
        \draw[->,thick,cyan] (D) -- +(\dirlength, -\dirlength);
      \end{scope}
    \end{tikzpicture}
  }
  \par
  \subfloat[Chamber $\mathcal{C}_2$.]{
    \begin{tikzpicture}[scale=0.9]
      \draw[ultra thin,lightgray] (-0.1,-0.1) grid (3.6,3.4);
      \draw[-{Stealth},semithick] (-0.2,0) -- (4,0) node[right] {$X_1$};
      \draw[-{Stealth},semithick] (0,-0.2) -- (0,4) node[above] {$X_2$};
      \node[label=below right:{$\mathbf{y_1}$}](A) at (3,0) {};
      \node[label=above right:{$\mathbf{y_2}$}](B) at (2,1) {};
      \node[label=above right:{$\mathbf{y_3}$}](C) at (1,2) {};
      \node[label=above left:{$\mathbf{y_4}$}](D) at (0,3) {};
      \draw[fill] (A) circle (1.5pt);
      \draw[fill] (B) circle (1.5pt);
      \draw[fill] (C) circle (1.5pt);
      \draw[fill] (D) circle (1.5pt);
      \draw[->,thick,cyan] (A) -- (B);
      \def\shf{0.2ex};
      \draw[->,transform canvas={xshift=\shf,yshift=\shf},thick, cyan] (B) -- (C);
      \draw[<-,transform canvas={xshift=-\shf,yshift=-\shf},thick, cyan] (B) -- (C);
      \draw[->,thick,cyan] (D) -- (C);
      \begin{scope}[shift={(\scopesp,0)}]
        \draw[ultra thin,lightgray] (-0.1,-0.1) grid (3.6,3.4);
        \draw[-{Stealth},semithick] (-0.2,0) -- (4,0) node[right] {$X_1$};
        \draw[-{Stealth},semithick] (0,-0.2) -- (0,4) node[above] {$X_2$};
        \node[label=below right:{$\mathbf{y_1}$}](A) at (3,0) {};
        \node[label=above right:{$\mathbf{y_2}$}](B) at (2,1) {};
        \node[label=above right:{$\mathbf{y_3}$}](C) at (1,2) {};
        \node[label=above left:{$\mathbf{y_4}$}](D) at (0,3) {};
        \draw[fill] (A) circle (1.5pt);
        \draw[fill] (B) circle (1.5pt);
        \draw[fill] (C) circle (1.5pt);
        \draw[fill] (D) circle (1.5pt);
        \draw[->,thick,cyan] (A) -- +(-\dirlength, \dirlength);
        \draw[->,thick,purple] (B) -- +(-\dirlength, \dirlength);
        \draw[->,thick,purple] (C) -- +(\dirlength, -\dirlength);
        \draw[->,thick,cyan] (D) -- +(\dirlength, -\dirlength);
      \end{scope}
    \end{tikzpicture}
  }
  \par
  \subfloat[Chamber $\mathcal{C}_3$.]{
    \begin{tikzpicture}[scale=0.9]
      \draw[ultra thin,lightgray] (-0.1,-0.1) grid (3.6,3.4);
      \draw[-{Stealth},semithick] (-0.2,0) -- (4,0) node[right] {$X_1$};
      \draw[-{Stealth},semithick] (0,-0.2) -- (0,4) node[above] {$X_2$};
      \node[label=below right:{$\mathbf{y_1}$}](A) at (3,0) {};
      \node[label=above right:{$\mathbf{y_2}$}](B) at (2,1) {};
      \node[label=above right:{$\mathbf{y_3}$}](C) at (1,2) {};
      \node[label=above left:{$\mathbf{y_4}$}](D) at (0,3) {};
      \draw[fill] (A) circle (1.5pt);
      \draw[fill] (B) circle (1.5pt);
      \draw[fill] (C) circle (1.5pt);
      \draw[fill] (D) circle (1.5pt);
      \def\shf{0.2ex};
      \draw[->,transform canvas={xshift=\shf,yshift=\shf},thick, cyan] (A) -- (B);
      \draw[<-,transform canvas={xshift=-\shf,yshift=-\shf},thick, cyan] (A) -- (B);
      \draw[->,thick,cyan] (C) -- (B);
      \draw[->,thick,cyan] (D) -- (C);
      \begin{scope}[shift={(\scopesp,0)}]
        \draw[ultra thin,lightgray] (-0.1,-0.1) grid (3.6,3.4);
        \draw[-{Stealth},semithick] (-0.2,0) -- (4,0) node[right] {$X_1$};
        \draw[-{Stealth},semithick] (0,-0.2) -- (0,4) node[above] {$X_2$};
        \node[label=below right:{$\mathbf{y_1}$}](A) at (3,0) {};
        \node[label=above right:{$\mathbf{y_2}$}](B) at (2,1) {};
        \node[label=above right:{$\mathbf{y_3}$}](C) at (1,2) {};
        \node[label=above left:{$\mathbf{y_4}$}](D) at (0,3) {};
        \draw[fill] (A) circle (1.5pt);
        \draw[fill] (B) circle (1.5pt);
        \draw[fill] (C) circle (1.5pt);
        \draw[fill] (D) circle (1.5pt);
        \draw[->,thick,cyan] (A) -- +(-\dirlength, \dirlength);
        \draw[->,thick,purple] (B) -- +(\dirlength, -\dirlength);
        \draw[->,thick,purple] (C) -- +(\dirlength, -\dirlength);
        \draw[->,thick,cyan] (D) -- +(\dirlength, -\dirlength);
      \end{scope}
    \end{tikzpicture}
  }
  \par
  \subfloat[Chamber $\mathcal{C}_4$.]{
    \begin{tikzpicture}[scale=0.9]
      \draw[ultra thin,lightgray] (-0.1,-0.1) grid (3.6,3.4);
      \draw[-{Stealth},semithick] (-0.2,0) -- (4,0) node[right] {$X_1$};
      \draw[-{Stealth},semithick] (0,-0.2) -- (0,4) node[above] {$X_2$};
      \node[label=below right:{$\mathbf{y_1}$}](A) at (3,0) {};
      \node[label=above right:{$\mathbf{y_2}$}](B) at (2,1) {};
      \node[label=above right:{$\mathbf{y_3}$}](C) at (1,2) {};
      \node[label=above left:{$\mathbf{y_4}$}](D) at (0,3) {};
      \draw[fill] (A) circle (1.5pt);
      \draw[fill] (B) circle (1.5pt);
      \draw[fill] (C) circle (1.5pt);
      \draw[fill] (D) circle (1.5pt);
      \def\shf{0.2ex};
      \draw[->,transform canvas={xshift=\shf,yshift=\shf},thick, cyan] (A) -- (B);
      \draw[<-,transform canvas={xshift=-\shf,yshift=-\shf},thick, cyan] (A) -- (B);
      \def\shf{0.2ex};
      \draw[->,transform canvas={xshift=\shf,yshift=\shf},thick, cyan] (C) -- (D);
      \draw[<-,transform canvas={xshift=-\shf,yshift=-\shf},thick, cyan] (C) -- (D);
      \begin{scope}[shift={(\scopesp,0)}]
        \draw[ultra thin,lightgray] (-0.1,-0.1) grid (3.6,3.4);
        \draw[-{Stealth},semithick] (-0.2,0) -- (4,0) node[right] {$X_1$};
        \draw[-{Stealth},semithick] (0,-0.2) -- (0,4) node[above] {$X_2$};
        \node[label=below right:{$\mathbf{y_1}$}](A) at (3,0) {};
        \node[label=above right:{$\mathbf{y_2}$}](B) at (2,1) {};
        \node[label=above right:{$\mathbf{y_3}$}](C) at (1,2) {};
        \node[label=above left:{$\mathbf{y_4}$}](D) at (0,3) {};
        \draw[fill] (A) circle (1.5pt);
        \draw[fill] (B) circle (1.5pt);
        \draw[fill] (C) circle (1.5pt);
        \draw[fill] (D) circle (1.5pt);
        \draw[->,thick,cyan] (A) -- +(-\dirlength, \dirlength);
        \draw[->,thick,purple] (B) -- +(\dirlength, -\dirlength);
        \draw[->,thick,purple] (C) -- +(-\dirlength, \dirlength);
        \draw[->,thick,cyan] (D) -- +(\dirlength, -\dirlength);
      \end{scope}
    \end{tikzpicture}
  }
  \caption{The four chambers associated to the quadrilateral on a line. Chambers $\mathcal{C}_1$, $\mathcal{C}_2$, $\mathcal{C}_3$ are single-sign-change chambers. Chamber $\mathcal{C}_4$ is {\em not} a single-sign-change chamber because the direction of the vectors \(\mathbf{u}_1,\dots,\mathbf{u}_4\) changes {\em three} times.}
  \label{fig:chamb1234}
\end{figure}

\begin{definition}\label{def:chambers}
  We call respectively the regions in \(\rgz^\vE\) (= \(\rgz^{12}\), by abuse of notation) corresponding to each case of~\cref{th:4gon} the \emph{\(i\)-th chamber}, and we denote them by \(\mathcal{C}_1,\dots,\mathcal{C}_4\).
  We call \emph{single-sign-change chambers} the chambers for which the direction in the sequence of vectors \(\mathbf{u}_1,\dots,\mathbf{u}_4\) changes only one time; in this case these are the chambers \(\mathcal{C}_1,\mathcal{C}_2,\mathcal{C}_3\).
\end{definition}

\begin{theorem}\label{thr:quadrilater-line-main}
  Consider the E-graph \(G=(V,E)\) given by~\cref{fig:4thGhat}.
  Consider a vector of rate constants \(\mathbf{k}\in\rgz^\vE\) and consider \(\mathbf{k}^{*}=(k_1^{*},\dots,k_4^{*})\) defined above.
  The dynamical system generated by \(G\) and \(\mathbf{k}\in\rgz^\vE\) is disguised toric if and only if
  \begin{enumerate}
    \item the vector \(\mathbf{k}\) belongs to the 1st chamber, or to the 2nd chamber, or to the 3rd chamber (i.e. these are the \emph{single-sign-change chambers})

          or
    \item the vector \(\mathbf{k}\) belongs to the 4th chamber and
          \begin{align}\label{eq:Segre}
            k_3^*k_2^*\leq k_4^*k_1^*.
          \end{align}
  \end{enumerate}
\end{theorem}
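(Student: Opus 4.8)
The plan is to run everything through the reduced data $\mathbf{k}^{*}=(k_1^{*},\dots,k_4^{*})$ furnished by \cref{th:4gon}, and to look for realisations on the complete graph of \cref{fig:4thGhat}, whose four complexes are collinear on $X_1+X_2=3$. Writing $t=x_2/x_1$ and reading off the signed coefficients of the scalar field $\dot x_1$, set $c_1:=-k_1^{*}<0$, $c_4:=k_4^{*}>0$, and $c_2,c_3$ equal to $\pm k_2^{*},\pm k_3^{*}$ according to the chamber (so $c_2=k_2^{*}>0$, $c_3=-k_3^{*}<0$ in the fourth chamber). The positive steady states of the reduced system correspond exactly to the positive roots of
\begin{equation*}
  q(t)=c_4 t^{3}+c_3 t^{2}+c_2 t+c_1 .
\end{equation*}
Put $\tau_1:=-c_3/c_4=k_3^{*}/k_4^{*}$ and $\tau_2:=-c_1/c_2=k_1^{*}/k_2^{*}$, so that in the fourth chamber the Segre inequality \eqref{eq:Segre} reads precisely $\tau_1\le\tau_2$; I will also use the elementary identities $q(\tau_1)=c_1+c_2\tau_1$ and $q(\tau_2)=\tau_2^{2}(c_3+c_4\tau_2)$.

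For sufficiency I would exhibit an explicit detailed balanced realisation on the reversible path $\by_1\leftrightarrow\by_2\leftrightarrow\by_3\leftrightarrow\by_4$. Imposing the detailed balance relations at a point $(a,b)$ with $t=b/a$ fixes $\hat k_{12}=k_1^{*}$, $\hat k_{43}=k_4^{*}$, $\hat k_{21}=k_1^{*}/t$, $\hat k_{34}=tk_4^{*}$, and forces two expressions for the central flux whose compatibility is exactly $q(t)=0$; positivity of the remaining rate constants then reduces to $t$ lying between $\tau_1$ and $\tau_2$ (one-sided when the corresponding coefficient is nonpositive). In each single-sign-change chamber Descartes' rule of signs gives $q$ a single positive root, and the sign of $q$ at the relevant endpoint ($q(\tau_1)<0$, or $q(\tau_2)>0$) shows that this root falls inside the admissible window, so the reduced system is detailed balanced, hence toric, for \emph{every} $\mathbf{k}$; this is case (1). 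In the fourth chamber the same argument runs as soon as $\tau_1<\tau_2$: strict Segre gives $q(\tau_1)<0<q(\tau_2)$, hence a root in $(\tau_1,\tau_2)$ and a genuine detailed balanced path realisation. The boundary $k_2^{*}k_3^{*}=k_1^{*}k_4^{*}$ is treated separately by the disconnected reversible graph $\{\by_1\leftrightarrow\by_2\}\cup\{\by_3\leftrightarrow\by_4\}$, which is detailed balanced at the common ratio $t=\tau_1=\tau_2$; this completes case (2) with equality.

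The harder direction is necessity: I must show that a disguised toric system in the fourth chamber forces $\tau_1\le\tau_2$. Suppose it has a complex balanced realisation on \emph{some} E-graph at a steady state $\bxz$, and let $t$ be its ratio. The key tool is the balanced-flux identity: with $J_{\by\to\by'}=k_{\by\to\by'}\bxz^{\by}$, complex balance gives $\sum_{\by\to\by'}J_{\by\to\by'}\bigl(\phi(\by')-\phi(\by)\bigr)=0$ for every function $\phi$ on the complexes. Taking $\phi$ to be a convex (respectively concave) function of the first coordinate $p$, the first-order term reproduces the realisation equations $\sum_{\by'}k_{\by\to\by'}(p(\by')-p(\by))=(\mathbf{u}_{\by})_1$, and the convexity estimate yields $\sum_i c_i\,t^{\,y_i}\,\phi'(p_i)\le 0$ (respectively $\ge 0$); crucially, every complex other than $\by_1,\dots,\by_4$ drops out because its net velocity vanishes. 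Specialising to the two piecewise-linear test functions with weight patterns $(1,1,0,0)$ and $(0,0,1,1)$ gives exactly $t\le\tau_2$ and $t\ge\tau_1$, whence $\tau_1\le t\le\tau_2$ and in particular $\tau_1\le\tau_2$, i.e. \eqref{eq:Segre}.

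I expect necessity to be the main obstacle, for two reasons. First, one must rule out \emph{arbitrary} realisations, including those that use auxiliary complexes off the line; the convexity identity above is engineered precisely so that such complexes contribute nothing, but making this reduction watertight needs care. Second, violating Segre does \emph{not} force multistationarity --- there are parameters in the fourth chamber with $k_2^{*}k_3^{*}>k_1^{*}k_4^{*}$ for which $q$ still has a single positive root --- so the obstruction is genuinely the infeasibility of the balanced-flux problem at that root rather than a count of steady states, and it is exactly this that the separating functionals $(1,1,0,0)$ and $(0,0,1,1)$ detect.
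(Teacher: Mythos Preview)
Your argument is correct, and in places cleaner than the paper's.

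For sufficiency the approaches coincide: both construct a detailed balanced realisation on the reversible path $\by_1\leftrightarrow\by_2\leftrightarrow\by_3\leftrightarrow\by_4$, with the compatibility of the central flux reducing to $q(t)=0$ and the positivity of $\hat{k}_{23},\hat{k}_{32}$ giving the window $\tau_1<t<\tau_2$. The paper handles the single-sign-change chambers by appeal to the later $N$-gon result (\cref{thr:cycles-are-done}), whereas you treat them uniformly through the same path realisation and an endpoint sign check; both are fine. The boundary case $k_2^{*}k_3^{*}=k_1^{*}k_4^{*}$ is dealt with identically in spirit --- you use the disconnected pair of reversible edges, the paper implicitly allows $a=b=0$.

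The genuine difference is in necessity. The paper first invokes \cite[Theorem~4.7]{cjy} to reduce any complex balanced realisation to one on the complete graph over $\by_1,\dots,\by_4$, and then compares inflow and outflow at the extreme vertex $\by_4$: the constraint $k'_{43}+2k'_{42}+3k'_{41}=k_4^{*}$ forces the total outgoing rate to be at most $k_4^{*}$, while $k'_{34}\ge k_3^{*}$, and under anti-Segre every steady-state ratio satisfies $\alpha<\tau_1$, giving a strict imbalance. Your route avoids the external reduction entirely: the balanced-flux identity $\sum J_{\by\to\by'}(\phi(\by')-\phi(\by))=0$ combined with the convexity bound $\phi(\by')-\phi(\by)\ge\phi'(p(\by))(p(\by')-p(\by))$ yields a linear inequality in the $c_i$ that holds for \emph{any} realising graph, because auxiliary sources contribute nothing once their net velocity vanishes. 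Choosing the two piecewise-linear test functions with slope vectors $(1,1,0,0)$ and $(0,0,1,1)$ on the first coordinate recovers exactly $t\le\tau_2$ and $t\ge\tau_1$. This is a duality/separating-functional argument rather than a direct vertex balance; it is self-contained, and it generalises more transparently to the $N$-gon. The paper's argument is more hands-on but leans on the nontrivial reduction theorem from \cite{cjy}; yours trades that dependency for a short convexity computation.
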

\begin{proof}
  The first case is a particular case of a more general fact proved in~\cref{thr:cycles-are-done}.
  \medskip

  Fix \(\mathbf{k}\) in the 4th chamber \(\mathcal{C}_4\).
  Then, the system generated by \(G\), \(\mathbf{k}\) is equal to the system generated by the E-graph \(G^{*}\) given by (D) in~\cref{fig:chamb1234} and \(\mathbf{k}^{*}\).
  Hence, we may restrict to the system generated by \(G^{*}\), \(\mathbf{k}^{*}\).

  Now, we consider the E-graph \(\hat{G}\) given by~\cref{fig:4thchamber-detail-completion}, which contains the same source vertices as the E-graph \(G^{*}\); we will obtain the desired result by considering the detailed balance conditions for \(\hat{G}\), as explained below.
  We also consider the rate constants \(\hat{\mathbf{k}}\) given by
  \begin{align*}
    \begin{aligned}
      \hat{k}_{12} & \coloneqq k_1^* \\
      \hat{k}_{21} & \coloneqq k_{2}^*+a
    \end{aligned} & & \begin{aligned}
      \hat{k}_{23} & \coloneqq a \\
      \hat{k}_{32} & \coloneqq b
    \end{aligned} & & \begin{aligned}
      \hat{k}_{43} & \coloneqq k_4^* \\
      \hat{k}_{34} & \coloneqq k_3^*+b
    \end{aligned}
  \end{align*} where \(a,b>0\). First we will show that, for every \(\mathbf{k}^{*}\) satisfying~\cref{eq:Segre}, there exist \(a,b>0\) for which the couple \((\hat{G},\hat{\mathbf{k}})\) satisfies the detailed balance condition (and then also the complex balanced condition).
  Second, we will show that if the system generated by \(G^{*}\), \(\mathbf{k}^{*}\) is disguised toric, then the condition~\cref{eq:Segre} is necessarily satisfied.
  \medskip

  The dynamical system generated by \(G^{*}\) and \(\mathbf{k}^{*}\) is
  \begin{align}\label{eq:diff1}
    \frac{\dd x_1}{\dd t} & = -k_{12}^* x_1^3 +k_{21}^* x_1^2 x_2 -k_{34}^* x_1 x_2^2 +k_{43}^* x_2^3;\\
    \frac{\dd x_2}{\dd t} & =-\frac{\dd x_1}{\dd t}.
  \end{align}
  So, given a positive steady state \((\tilde{x}_1,\tilde{x}_2)\in\rgz^2\), the ratio
  \(\alpha\coloneqq \frac{\tilde{x}_2}{\tilde{x}_1}>0\) satisfies the equation
  \begin{align}
    \label{eq:alpha-and-ks}
    k_1^*-\alpha k_2^*-\alpha^2 (\alpha k_{4}^*-k_{3}^*)=0.
  \end{align}
  From this equation and assumption~\cref{eq:Segre} it follows that
  \[
    \frac{k_{3}^*}{k_{4}^*}\leq \alpha\leq \frac{k_1^{*}}{k_2^{*}}.
  \] Hence, we may set \(b\coloneqq \alpha k_4^{*}-k_3^{*}\) and \(a:= \alpha b\).
  It is trivial to check that the point \((\tilde{x}_1,\tilde{x}_2)\) satisfies the detailed balance conditions for the couple \((\hat{G}, \hat{\mathbf{k}})\) if and only if
  \begin{align*}\label{eq:equiv}
    \frac{\hat{k}_{12}}{\hat{k}_{21}}=
    \frac{\hat{k}_{23}}{\hat{k}_{32}}=
    \frac{\hat{k}_{34}}{\hat{k}_{43}}=\alpha,
  \end{align*} which are satisfied for such values of \(a\) and \(b\).

  \medskip

  Now assume that~\cref{eq:Segre} is \emph{not} satisfied.
  If the system generated by \(G^{*}\) and \(\mathbf{k}^{*}\) is disguised toric, by \cite[Theorem 4.7]{cjy}, we should be able to find a complex balanced realization using the complete directed E-graph \(G\) given by~\cref{fig:4thGhat}.
  We will focus on the vertex \(\by_4\), since we just need to check that the complex balanced condition fails at one vertex.
  Consider the following realization using the E-graph \(G'=G\) and \(\mathbf{k}'\) given by
  \[
    k'_{43}\coloneqq a\quad k'_{42}\coloneqq \frac{1}{2}b\quad k'_{41}\coloneqq \frac{1}{3}(k^{*}_{4}-a-b)\quad k'_{34}\coloneqq k_3^{*}+a+b,
  \] where \(a,b>0\) and \(a+b<k'_{43}\) (we just focus on the relevant rate constants).

  Again, fix a positive steady state \((\tilde{x}_1,\tilde{x}_2)\) of the system generated by \(G^{*}\) and \(\mathbf{k}^{*}\).
  If for this steady state we have \(k_{3}^* \tilde{x}_1 \tilde{x}_2^2<k_{4}^* \tilde{x}_2^3\), then, by~\cref{eq:alpha-and-ks}, \cref{eq:Segre} is satisfied.
  Hence, we assume that \(k_{3}^* \tilde{x}_1 \tilde{x}_2^2>k_{4}^* \tilde{x}_2^3\).
  So, for the outflow at \(\mathbf{y}_4\) we have
  \[
    (k'_{43}+k'_{42}+k'_{41})\tilde{x}_2^3<k_{4}^* \tilde{x}_2^3<k_{3}^* \tilde{x}_1 \tilde{x}_2^2<k'_{34} \tilde{x}_1 \tilde{x}_2^2,
  \] and the inflow is greater than \(k'_{34} \tilde{x}_1 \tilde{x}_2^2\).
  From this, we conclude that the complex balanced condition cannot be satisfied.
\end{proof}

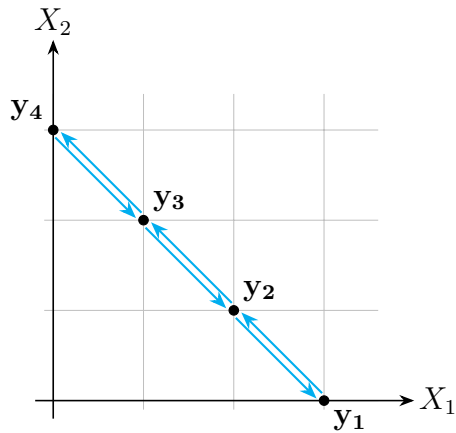
\begin{figure}
  \centering
  \begin{tikzpicture}[scale=1.2]
    \draw[ultra thin,lightgray](-0.1,-0.1) grid (3.6,3.4);
    \draw[-{Stealth},semithick] (-0.2,0) -- (4,0) node[right] {$X_1$};
    \draw[-{Stealth},semithick] (0,-0.2) -- (0,4) node[above] {$X_2$};
    \node[label=below right:{$\mathbf{y_1}$}](A) at (3,0) {};
    \node[label=above right:{$\mathbf{y_2}$}](B) at (2,1) {};
    \node[label=above right:{$\mathbf{y_3}$}](C) at (1,2) {};
    \node[label=above left:{$\mathbf{y_4}$}](D) at (0,3) {};
    \draw[fill] (A) circle (1.5pt);
    \draw[fill] (B) circle (1.5pt);
    \draw[fill] (C) circle (1.5pt);
    \draw[fill] (D) circle (1.5pt);
    \def\shf{0.2ex};
    \draw[->,transform canvas={xshift=\shf,yshift=\shf},thick, cyan] (A) -- (B);
    \draw[<-,transform canvas={xshift=-\shf,yshift=-\shf},thick, cyan] (A) -- (B);
    \draw[->,transform canvas={xshift=\shf,yshift=\shf},thick, cyan] (B) -- (C);
    \draw[<-,transform canvas={xshift=-\shf,yshift=-\shf},thick, cyan] (B) -- (C);
    \draw[->,transform canvas={xshift=\shf,yshift=\shf},thick, cyan] (C) -- (D);
    \draw[<-,transform canvas={xshift=-\shf,yshift=-\shf},thick, cyan] (C) -- (D);
  \end{tikzpicture}
  \caption{Detailed balanced extension of \(\mathcal{C}_4\).}
  \label{fig:4thchamber-detail-completion}
\end{figure}

\section{Globally stable systems which are not disguised toric}
\label{sec:globally-stable-not-disguised-toric}

In this section we discuss the difference between uniqueness of equilibria and the property of being disguised toric.
In particular, we provide examples of dynamical systems which are globally stable but fail to be disguised toric.
We study the qualitative behaviour of the dynamical systems inside a fixed stoichiometric compatibility class, i.e., up to conservation laws. See \cite[Definition 3.4.6.]{feinberg} for a definition.

\begin{lemma}\label{lem:discrim}
  Consider the E-graph \(G^{*}\) given by~(D) of \cref{fig:chamb1234}.
  The dynamical system generated by $G$, $\mathbf{k}^{*}$ has exactly one equilibrium point in each stoichiometric compatibility class if and only if the following inequality is satisfied
  \begin{equation}\label{eq:delta}
    (k^*_{3} k^*_{2})^2-4k^*_{4}(k^*_{2})^3-4(k^*_{3})^3k^*_{1}-27(k^*_{4}k^*_{1})^2+18k^*_{4}k^*_{3}k^*_{1}k^*_{2}<0.
  \end{equation}
\end{lemma}

\begin{proof}
  The dynamical system generated by $G^{*}$ and $\mathbf{k}$ is~\cref{eq:diff1}. Since we are interested in the dynamics inside a fixed stoichiometric compatibility class, we have $x_1+x_2$ is constant (see Figure \ref{fig:equil}), since $\frac{\dd x_2}{\dd t}=-\frac{\dd x_1}{\dd t}$.
  Given a steady state of the system generated by $G$ and $\mathbf{k}^*$, the ratio $\alpha\coloneqq \frac{\tilde{x}_2}{\tilde{x}_1}$ satisfies~\cref{eq:alpha-and-ks}.
  Thus we are interested in the zeros of the cubic polynomial:
  \begin{equation*}
    f(\alpha)\coloneqq k_{1}^* -k_{2}^* \alpha +k_{3}^* \alpha^2 -k_{4}^* \alpha^3.
  \end{equation*}

  The dynamical system (\ref{eq:diff1}) has no negative equilibria, since the cubic polynomial $f$ has no negative real roots.
  This follows from Descartes's rule of signs (\cite[Theorem 2.33]{BPR}) for counting positive roots of a real polynomial in one variable.
  Namely, we have the derivative $f'(\alpha)= -k_{2}^* +2 k_{3}^* \alpha -3 k_{4}^* \alpha^2.$ Since $f'(-\alpha)= -k_{2}^* -2 k_{3}^* \alpha -3 k_{4}^* \alpha^2,$ the number of sign-changes in the coefficients of  $f'(-\alpha)$ is zero, thus $f'$ has no negative real roots.
  In addition, by Descartes' rule of signs we obtain that $f$ can either have one or three positive real roots, counted with multiplicity.

  Since $f$ is a cubic polynomial in $\alpha,$ with positive real coefficients $k_{i}^*,$ condition (\ref{eq:delta}) is equivalent with the discriminant of $f$ being negative, i.e., $f$ has one real root and two complex conjugate roots. See \cite[Subsection 4.1]{BPR}. In other words, if (\ref{eq:delta}) holds, then in each stoichiometric compatibility class there exists a unique positive equilibrium.
\end{proof}

\begin{figure}
  \centering
  \begin{minipage}[b]{0.45\linewidth}
    \includegraphics[scale=0.2]{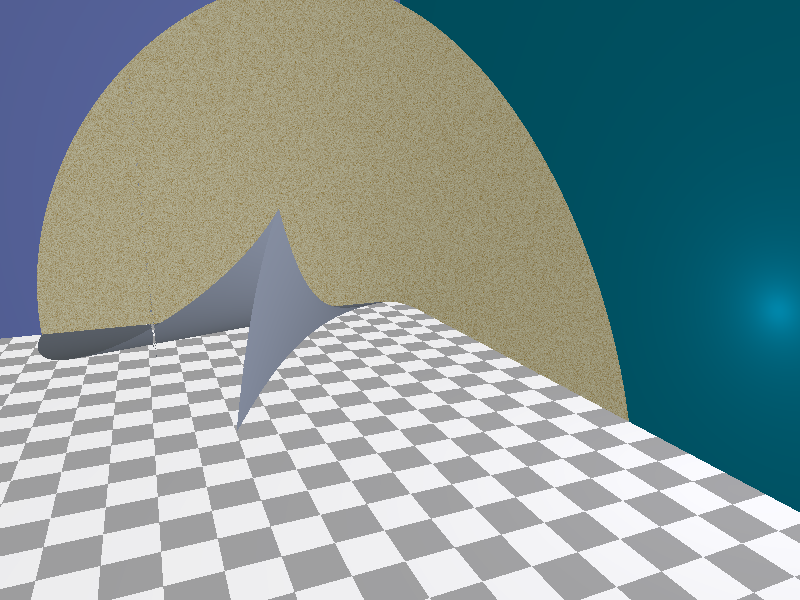}
  \end{minipage}
  \begin{minipage}[b]{0.45\linewidth}
    \includegraphics[scale=0.2]{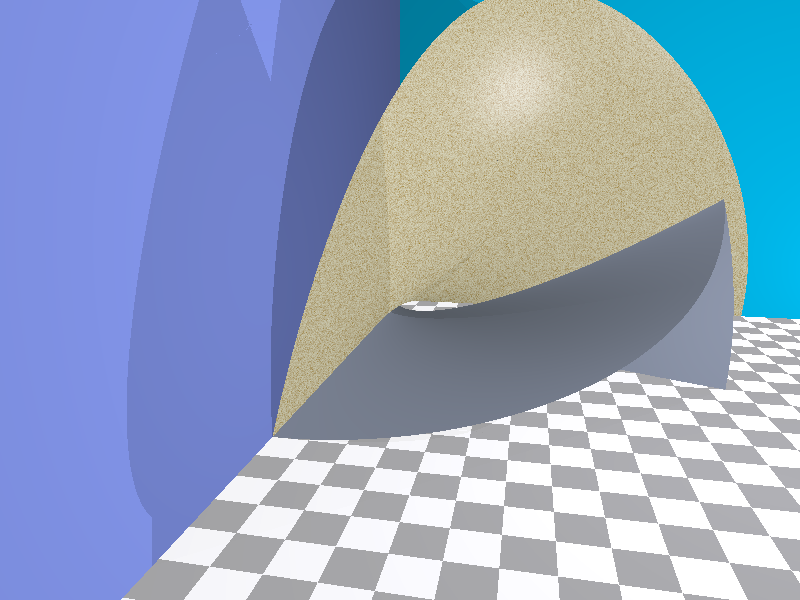}
  \end{minipage}
  \caption{Two different perspectives of the zero locus of the discriminant $\Delta=0$ (the grey hypersurface), in the positive orthant, where the equation (\ref{eq:delta}) is scaled by setting $k_{1}^*=1$. For the positive parameters $\mathbf{k}$ situated above the discriminant surface and in the positive orthant, the dynamical system has exactly one equilibrium point. The dynamical systems corresponding to positive parameters $\mathbf{k}$ below the discriminant surface are multistationary. The beige surface is the Segre variety given by $k_{3}^*k_{2}^*-k_{4}^*k_{1}^*=0$. All the parameters $\mathbf{k}$ on the surface and above it (\ref{eq:Segre}) give rise to complex balance dynamical systems. In particular, above the beige surface the dynamical systems are disguised toric. \label{fig:discriminant}}
\end{figure}

\begin{remark}
  The hypersurface $k_{3}^*k_{2}^*-k_{4}^*k_{1}^*=0$ appears in this context in \cite[page 74]{shiuTh}, where the author studied the toric locus of this reaction network. Here we prove that the disguised toric locus is the open set $k_{3}^*k_{2}^*-k_{4}^*k_{1}^*\leq 0$.
\end{remark}

We are working in the two-dimensional setting and with one-dimensional stoichiometric compatibility class. In addition, a simple computation shows that near the axes the direction of the vector field given by \cref{eq:diff1} points towards the interior of a fixed stoichiometric compatibility class, as in Figure \ref{fig:equil}. Thus if there exists a unique equilibrium, then this equilibrium is also a globally attracting point (see for example Figure \ref{fig:equil}).
\begin{figure}
  \centering
  \begin{tikzpicture}[scale=0.8,>={Latex}]
    \def\dd{1.8pt}
    \draw[-{Stealth},semithick] (-0.2,0) -- (4.6,0) node[right] {$x_1$};
    \draw[-{Stealth},semithick] (0,-0.2) -- (0,4.6) node[above] {$x_2$};
    \node at (2,0) [below]{$\Delta<0$};
    \draw[name path=ss, red] (0,0) -- (2.3,3.6);
    \foreach \l in {2,3,4}{
      \draw[name path=sto\l] (\l,0) -- (0,\l);
      \draw[fill=red, name intersections={of=ss and sto\l, total=\t}]
      \foreach \s in {1,...,\t}{(intersection-\s) circle (\dd)};
      \draw[->, name intersections={of=ss and sto\l, total=\t}]
      \foreach \s in {1,...,\t}{($(intersection-\s)!0.251!(\l,0)$) -- ($(intersection-\s)!0.25!(\l,0)$)};
      \draw[->, name intersections={of=ss and sto\l, total=\t}]
      \foreach \s in {1,...,\t}{($(intersection-\s)!0.651!(\l,0)$) -- ($(intersection-\s)!0.65!(\l,0)$)};
      \draw[->, name intersections={of=ss and sto\l, total=\t}]
      \foreach \s in {1,...,\t}{($(intersection-\s)!0.251!(0,\l)$) -- ($(intersection-\s)!0.25!(0,\l)$)};
      \draw[->, name intersections={of=ss and sto\l, total=\t}]
      \foreach \s in {1,...,\t}{($(intersection-\s)!0.651!(0,\l)$) -- ($(intersection-\s)!0.65!(0,\l)$)};
    };
    \begin{scope}[shift={(7,0)}]
      \draw[-{Stealth},semithick] (-0.2,0) -- (4.6,0) node[right] {$x_1$};
      \draw[-{Stealth},semithick] (0,-0.2) -- (0,4.6) node[above] {$x_2$};
      \node at (2,0) [below]{$\Delta>0$};
      \draw[name path=ss1, red] (0,0) -- (1,3.6);
      \draw[name path=ss2, red] (0,0) -- (2.3,2.8);
      \draw[name path=ss3, red] (0,0) -- (3.6,1);
      \foreach \l in {2,3,4}{
        \draw[name path=sto1\l] (\l,0) -- (0,\l);
        \draw[fill=red, name intersections={of=ss1 and sto1\l, total=\t}]
        \foreach \s in {1,...,\t}{(intersection-\s) coordinate (iss1-\l) circle (\dd)};
        \draw[->] ($(iss1-\l)!0.31!(0,\l)$) -- ($(iss1-\l)!0.3!(0,\l)$);
        \draw[fill=red, name intersections={of=ss3 and sto1\l, total=\t}]
        \foreach \s in {1,...,\t}{(intersection-\s) coordinate (iss3-\l) circle (\dd)};
        \draw[->] ($(iss3-\l)!0.31!(\l,0)$) -- ($(iss3-\l)!0.3!(\l,0)$);
        \draw[fill=red, name intersections={of=ss2 and sto1\l, total=\t}]
        \foreach \s in {1,...,\t}{(intersection-\s) coordinate (iss2-\l) circle (\dd)};
        \draw[->] ($(iss3-\l)!0.451!(iss2-\l)$) -- ($(iss3-\l)!0.45!(iss2-\l)$);
        \draw[->] ($(iss1-\l)!0.31!(iss2-\l)$) -- ($(iss1-\l)!0.3!(iss2-\l)$);
      };
    \end{scope}
  \end{tikzpicture}
  \caption{The two-dimensional phase plane $x_1 O x_2$ for the system (\ref{eq:diff1}): if there exists a unique equilibrium (the red points), then the equilibrium is also a globally attracting point. This holds since each stoichiometric compatibility class (the segments where $x+y=\mathrm{constant}$ in $\rgz^2$) is one-dimensional.}
  \label{fig:equil}
\end{figure}
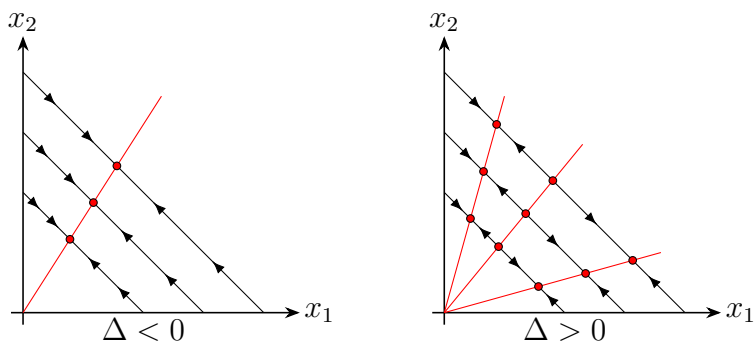

Hence for the points in the parameter space situated above the zero locus of the discriminant in Figure \ref{fig:discriminant} (where we dehomogenise the space of rate constants by setting $k_{1}^* = 1$), there exists a globally attracting fixed point for each stoichiometric compatibility class (i.e., up to conservation law).

In other words, the zero locus of the discriminant (the grey hypersurface), completely separates the globally stable dynamical systems (those corresponding to the parameters above the grey surface) and the multistationary dynamical systems (those corresponding to the parameters below the grey surface). Note that a dynamical system being multistationary or not is an important property of bio-chemical reaction networks (see \cite{BD}), since multistationarity can be translated into distinct responses of the cells, in function of their initial conditions (up to conservation law).

One can show that inequality (\ref{eq:Segre}) implies (\ref{eq:delta}). This has the following interpretation: there are dynamical systems which have a single equilibrium in each stoichiometric compatibility class, but that fail to be disguised toric. This is because at the equilibrium point the complex balance conditions fail to be satisfied.

\section{Filling an empty toric locus}\label{sec:rectange-E-graph}
In this section, we study the E-graph \(G=(V,E)\) given by~\cref{fig:4reactions}, which is a generalization of \cite[Example 5.2]{cjy}.
Here, we consider
\begin{align}\label{eq:complBal00}
  \begin{aligned}
    \mathbf{y}_1\coloneqq
    \begin{pmatrix*}
      0 \\ 0
    \end{pmatrix*} \\
    \mathbf{y}_5\coloneqq \mathbf{y}_1+
    \begin{pmatrix*}
      \alpha A \\ \beta B
    \end{pmatrix*}
  \end{aligned} & &
                    \begin{aligned}
                      \mathbf{y}_2\coloneqq
                      \begin{pmatrix*}
                        A \\ 0
                      \end{pmatrix*} \\
                      \mathbf{y}_6\coloneqq \mathbf{y}_2+
                      \begin{pmatrix*}
                        -\alpha A \\ \beta B
                      \end{pmatrix*}
                    \end{aligned} & &
                                      \begin{aligned}
                                        \mathbf{y}_3\coloneqq
                                        \begin{pmatrix*}
                                          A \\ B
                                        \end{pmatrix*} \\
                                        \mathbf{y}_7\coloneqq \mathbf{y}_3+
                                        \begin{pmatrix*}
                                          -\alpha A \\ -\beta B
                                        \end{pmatrix*}
                                      \end{aligned} & &
                                                        \begin{aligned}
                                                          \mathbf{y}_4\coloneqq
                                                          \begin{pmatrix*}
                                                            0 \\ B
                                                          \end{pmatrix*} \\
                                                          \mathbf{y}_8\coloneqq \mathbf{y}_4+
                                                          \begin{pmatrix*}
                                                            \alpha A \\ -\beta B
                                                          \end{pmatrix*}
                                                        \end{aligned}
\end{align}

with $A,B>0$, $\alpha,\beta\ge 0$ and $\alpha\beta>0$.
When $\alpha,\beta>0$, the E-graph $G$ is not weakly reversible and its toric locus \(K(G)\) is empty.
But, as \cref{coro:dis-toric-locus-rectangle} shows, its disguised toric locus \(\hat{K}(G)\) is a semialgebraic set of Lebesgue positive measure.
Instead, when $\alpha=0$ or $\beta=0$, the E-graph $G$ is weakly reversible and its toric locus $K(G)$ is not empty, it is a hypersurface in $\rgz^E$.
But now, $\hat{K}(G)=K(G)$, hence the disguised toric locus $\hat{K}(G)$ has mesure zero (see~\cref{rmk:rectangele-tangent}).

\begin{figure}[H]
  \centering
  \begin{tikzpicture}[scale=.8]
    \draw[ultra thin,lightgray](-0.1,-0.1) grid (6.6,4.4);
    \draw[-{Stealth},semithick] (-0.2,0) -- (6.8,0) node[right] {$X_1$};
    \draw[-{Stealth},semithick] (0,-0.2) -- (0,4.6) node[above] {$X_2$};
    \fill[white] (2.4,-0.2) rectangle (3.6,0.2);
    \draw[line width=2pt, line cap=round, dash pattern=on 0pt off 3\pgflinewidth] (2.7,0) -- (3.4,0);
    \fill[white] (-0.2,1.4) rectangle (0.2,2.6);
    \draw[line width=2pt, line cap=round, dash pattern=on 0pt off 3\pgflinewidth] (0,1.7) -- (0,2.4);
    \node[label=below left:{$\mathbf{y_1}$}](A) at (0,0) {};
    \node[label=below right:{$\mathbf{y_2}$}](B) at (6,0) {};
    \node[label=above right:{$\mathbf{y_3}$}](C) at (6,4) {};
    \node[label=above left:{$\mathbf{y_4}$}](D) at (0,4) {};
    \def\stx{1.1};
    \def\sty{1.5};
    \path (A) +(\stx,\sty) coordinate (P);
    \path (B) +(-\stx,\sty) coordinate (Q);
    \path (C) +(-\stx,-\sty) coordinate (R);
    \path (D) +(\stx,-\sty) coordinate (S);
    \foreach \l/\t in {A/P,B/Q,C/R,D/S}{
      \draw[fill] (\l) circle (1.5pt);
      \draw[->,thick,orange] (\l) -- (\t);}
    \foreach \l in {P,Q,R,S} {\draw[fill] (\l) circle (1.5pt);}
    \node[label=below:{$\mathbf{y_5}$}] at (P) {};
    \node[label=below:{$\mathbf{y_6}$}] at (Q) {};
    \node[label=above:{$\mathbf{y_7}$}] at (R) {};
    \node[label=above:{$\mathbf{y_8}$}] at (S) {};
  \end{tikzpicture}
  \caption{Four reactions that start at the corners of a rectangle: graph $G$, rates $k_i>0$.}
  \label{fig:4reactions}
\end{figure}
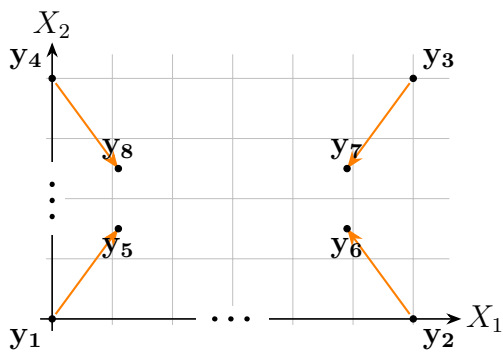

Following \cite[Theorem 4.7]{cjy}, we consider the realization using the complete directed graph on the sources of \(G\), as in Figure~\ref{fig:rectange-complete}.

\begin{figure}[H]
  \centering
  \begin{tikzpicture}[scale=.8, inner sep=6pt]
    \draw[ultra thin,lightgray](-0.1,-0.1) grid (6.6,4.4);
    \draw[-{Stealth},semithick] (-0.2,0) -- (6.8,0) node[right] {$X_1$};
    \draw[-{Stealth},semithick] (0,-0.2) -- (0,4.6) node[above] {$X_2$};
    \fill[white] (2.4,-0.2) rectangle (3.6,0.2);
    \draw[line width=2pt, line cap=round, dash pattern=on 0pt off 3\pgflinewidth] (2.7,0) -- (3.4,0);
    \fill[white] (-0.2,1.4) rectangle (0.2,2.6);
    \draw[line width=2pt, line cap=round, dash pattern=on 0pt off 3\pgflinewidth] (0,1.7) -- (0,2.4);
    \node[label=below left:{$\mathbf{y_1}$}](A) at (0,0) {};
    \node[label=below right:{$\mathbf{y_2}$}](B) at (6,0) {};
    \node[label=above right:{$\mathbf{y_3}$}](C) at (6,4) {};
    \node[label=above left:{$\mathbf{y_4}$}](D) at (0,4) {};
    \def\stx{1.1};
    \def\sty{1.5};
    \path (A) +(\stx,\sty) coordinate (P);
    \path (B) +(-\stx,\sty) coordinate (Q);
    \path (C) +(-\stx,-\sty) coordinate (R);
    \path (D) +(\stx,-\sty) coordinate (S);
    \foreach \l/\t in {A/P,B/Q,C/R,D/S}{
      \draw[fill] (\l) circle (1.5pt);
      \draw[->,thick,orange] (\l) -- (\t);}
    \foreach \l in {P,Q,R,S} {\draw[fill] (\l) circle (1.5pt);}
    \node[label=below:{$\mathbf{y_5}$}] at (P){};
    \node[label=below:{$\mathbf{y_6}$}] at (Q){};
    \node[label=above:{$\mathbf{y_7}$}] at (R){};
    \node[label=above:{$\mathbf{y_8}$}] at (S){};
    \def\shft{0.5ex};
    \foreach \s/\t in {A/B,D/C}{
      \draw[->,transform canvas={yshift=-\shft},thick, cyan] (\s) -- (\t);
      \draw[<-,transform canvas={yshift=\shft},thick,cyan] (\s) -- (\t);
    }
    \foreach \s/\t in {B/C,D/A}{
      \draw[->,transform canvas={xshift=-\shft},thick, cyan] (\s) -- (\t);
      \draw[<-,transform canvas={xshift=\shft},thick,cyan] (\s) -- (\t);
    }
    \def\shf{0.2ex};
    \draw[->,transform canvas={xshift=\shf,yshift=-\shf},thick, cyan] (A) -- (C);
    \draw[->,transform canvas={xshift=-\shf,yshift=\shf},thick, cyan] (C) -- (A);
    \draw[->,transform canvas={xshift=\shf,yshift=\shf},thick, cyan] (D) -- (B);
    \draw[->,transform canvas={xshift=-\shf,yshift=-\shf},thick, cyan] (B) -- (D);
  \end{tikzpicture}
  \caption{Complete graph over the sources of $G$: graph $\hat{G}$, rates $\hat{k}_i\ge 0$.}
  \label{fig:rectange-complete}
\end{figure}
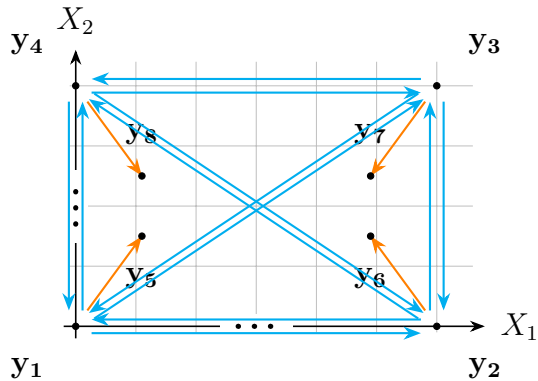

\begin{theorem}\label{th:diagrectangle}
  The dynamical system generated by the E-graph $G=(V,E)$ given by Figure \ref{fig:4reactions} and \(\mathbf{k}\in\rgz^E\) has a complex balanced realization using the E-graph $\hat{G}=(\hat{V},\hat{E})$ given by~\cref{fig:rectange-complete} if and only if
  \begin{equation}\label{eq:rectangle-disg-locus-open}
    \Bigl(\frac{\alpha-\beta}{\alpha+\beta}\Bigr)^2<\frac{k_1 k_3}{k_2 k_4}<\Bigl(\frac{\alpha+\beta}{\alpha-\beta}\Bigr)^2.
  \end{equation}
\end{theorem}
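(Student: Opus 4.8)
The plan is to analyse directly, for a fixed $\mathbf{k}\in\rgz^E$, when a positive $\hat{\mathbf{k}}$ on the complete graph $\hat G$ of \cref{fig:rectange-complete} simultaneously reproduces the vector field and is complex balanced; since this is an \enquote{if and only if}, a single feasibility analysis will settle both directions. Writing $p\coloneqq x_1^A$ and $q\coloneqq x_2^B$, the four corners carry the monomials $1,p,pq,q$, so $F_{\hat G,\hat{\mathbf{k}}}=F_{G,\mathbf{k}}$ is the requirement that at each corner the three outgoing edges of $\hat G$ reproduce the corresponding reaction vector of $G$. Each corner has three outgoing edges and two coordinatewise constraints, leaving one free parameter per corner; I would take these to be the four diagonal rates $d_1\coloneqq\hat k_{13}$, $d_2\coloneqq\hat k_{24}$, $d_3\coloneqq\hat k_{31}$, $d_4\coloneqq\hat k_{42}$, so that the eight boundary rates become $\alpha k_i-d_i$ and $\beta k_i-d_i$ in an appropriate labelling. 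Positivity of all twelve rates is then exactly $0<d_i<\min(\alpha,\beta)\,k_i$.

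The decisive first step is to pin down the balancing point. A complex balanced steady state of $(\hat G,\hat{\mathbf{k}})$ is in particular a steady state of the common field $F_{G,\mathbf{k}}$; adding and subtracting its two components shows that $F_{G,\mathbf{k}}=0$ forces $k_2p=k_4q$ and $k_3pq=k_1$, hence a \emph{unique} positive steady state $\bxz$ with $p^*=\sqrt{k_1k_4/(k_2k_3)}$ and $q^*=\sqrt{k_1k_2/(k_3k_4)}$. Consequently the monomial values $m_i\coloneqq\bxz^{\by_i}$ are determined in advance, and they satisfy the two relations $k_1m_1=k_3m_3$ and $k_2m_2=k_4m_4$. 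Fixing $\bxz$ this way is what removes the apparent freedom in the location of the balancing point and turns complex balancing into a linear feasibility problem.

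Next I would use the flux form of \cref{def:toric-system}: $(\hat G,\hat{\mathbf{k}})$ is complex balanced at $\bxz$ precisely when the weighted rates $\tilde k_{ij}\coloneqq \hat k_{ij}m_i$ form a stationary flux, that is, inflow equals outflow at each of the four vertices. Substituting the parametrisation above and simplifying with the two relations $k_1m_1=k_3m_3$ and $k_2m_2=k_4m_4$, I expect all four vertex balances to collapse to the single linear equation
\[
  d_1-d_2m_2+d_3m_3-d_4m_4=(\alpha+\beta)\bigl(k_1-k_2m_2\bigr).
\]
Verifying this collapse is the computational core of the argument: the two steady-state relations are exactly what make three of the four balances dependent on the fourth.

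Finally, the problem becomes whether this one equation admits a solution in the open box $\prod_i(0,\min(\alpha,\beta)k_i)$. The left-hand side is a non-constant linear function, so its image over the open box is the open interval between the two corner values $-2\min(\alpha,\beta)\,k_2m_2$ and $2\min(\alpha,\beta)\,k_1$ (again using $k_3m_3=k_1$ and $k_2m_2=k_4m_4$). A positive realization thus exists iff the fixed right-hand side lies strictly inside this interval; splitting the two resulting inequalities and using $\alpha+\beta-2\min(\alpha,\beta)=|\alpha-\beta|$ gives $\tfrac{|\alpha-\beta|}{\alpha+\beta}k_1<k_2m_2<\tfrac{\alpha+\beta}{|\alpha-\beta|}k_1$. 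Since $k_2m_2=k_1\sqrt{k_2k_4/(k_1k_3)}$, squaring and taking reciprocals recovers exactly \eqref{eq:rectangle-disg-locus-open}. The main obstacle is the middle step, namely confirming the fourfold collapse to a single equation, while the insistence on strict positivity of all twelve rates is what yields the open (strict) inequalities rather than their closed versions.
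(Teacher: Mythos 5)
Your proof is correct, and it takes a genuinely different route from the paper's. The paper works entirely on the parameter side: it computes the defining equation of the toric locus $K(\hat{G})$ via the matrix-tree/Cayley-matrix method of \cite{MR2561288} (a degree-6 polynomial with 346 terms), substitutes the parametrization \eqref{eq:embed01}, observes that everything collapses to the single condition $k_1k_3\bigl(a+c-(\alpha+\beta)\bigr)^2=k_2k_4\bigl(b+d-(\alpha+\beta)\bigr)^2$, and then optimizes the resulting ratio over the open box. You instead work on the state-space side: you first pin down the unique positive steady state $\bxz$ (using that a complex balanced state is in particular a steady state of the common vector field, and that $F_{G,\mathbf{k}}=0$ forces $k_3pq=k_1$ and $k_2p=k_4q$), then impose the four complex-balance conditions of \cref{def:toric-system} at that $\bxz$ and show they collapse to one linear equation in the diagonal rates. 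I checked the collapse: with $d_1=k_1a$, $d_2=k_2b$, $d_3=k_3c$, $d_4=k_4d$ and the relations $k_3m_3=k_1$, $k_2m_2=k_4m_4$, all four vertex balances do reduce to $d_1-d_2m_2+d_3m_3-d_4m_4=(\alpha+\beta)(k_1-k_2m_2)$, which (after substituting $d_3m_3=k_1c$ and $d_2m_2+d_4m_4=k_2m_2(b+d)$) is precisely a signed square root of the paper's equation \eqref{eq:disguised-toric-rectangle2}, the sign being determined because $a+c$ and $b+d$ both lie below $\alpha+\beta$. Your linear feasibility analysis over the open box then reproduces \eqref{eq:rectangle-disg-locus-open} exactly. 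What your approach buys is the avoidance of the heavy symbolic computation (no 346-term polynomial, no quantifier elimination over a quadratic), a cleaner explanation of \emph{why} the condition is a single scalar constraint, and a uniform treatment of the degenerate case $\alpha=\beta$ (your interval condition becomes vacuously true there, whereas the paper must invoke single-target networks separately); what the paper's approach buys is that it instantiates the general Algorithm \ref{algorithmDisguised1} pipeline, which is the methodological point of that section. The only thing I would ask you to make explicit is the "only if" bookkeeping: your parametrization by the four diagonal rates is a bijection onto \emph{all} positive $\hat{\mathbf{k}}$ realizing $F_{G,\mathbf{k}}$, so failure of the interval condition really does rule out every realization, not just those of a special form.
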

\begin{proof}

  Denote respectively by $k_i$ and $\hat{k}_{ij}>0$ the rate of the reaction $\mathbf{y_i}\rightarrow \mathbf{y_j}$ in $G$ and $\hat{G}$.
  Fix rate constants \(\mathbf{k}\in\rgz^E\) and consider rate constants \(\hat{\mathbf{k}}\in\rgz^{\hat{E}}\) given by
  \begin{align}\label{eq:embed01}
    \begin{aligned}
      \hat{k}_{12}&\coloneqq k_1(\alpha-a)\\
      \hat{k}_{13}&\coloneqq k_1 a\\
      \hat{k}_{14}&\coloneqq k_1(\beta-a)
    \end{aligned} & &
                      \begin{aligned}
                        \hat{k}_{21}&\coloneqq k_2(\alpha-b)\\
                        \hat{k}_{23}&\coloneqq k_2(\beta-b)\\
                        \hat{k}_{24}&\coloneqq k_2 b
                      \end{aligned} & &
                                        \begin{aligned}
                                          \hat{k}_{31}&\coloneqq k_3 c\\
                                          \hat{k}_{32}&\coloneqq k_3(\beta-c)\\
                                          \hat{k}_{34}&\coloneqq k_3(\alpha-c)
                                        \end{aligned} & &
                                                          \begin{aligned}
                                                            \hat{k}_{41}&\coloneqq k_4(\beta-d)\\
                                                            \hat{k}_{42}&\coloneqq k_4 d\\
                                                            \hat{k}_{43}&\coloneqq k_4(\alpha-d).
                                                          \end{aligned}
  \end{align} When
  \begin{align*}
    0<a,b,c,d<\min\{\alpha,\beta\},
  \end{align*} we have \(\hat{k}_{ij}>0\) and the systems generated by \(G,\mathbf{k}\) and by \(\hat{G}, \hat{\mathbf{k}}\) are equal.

  In \cite[Theorem 9]{MR2561288}, the authors present a method to derive the equations defining the toric locus of a graph from its Laplacian and the nullspace of its Cayley matrix.
  Following this method, we computed the equations for the toric locus of $\hat{G}$.
  It is given by a single equation
  \begin{equation*}
    p(\hat{\mathbf{k}})=0,
  \end{equation*} where $p(\hat{\mathbf{k}})$ is an homogeneous polynomial of degree 6 and 346 terms, we do not reproduce it here.
  Observe that both the Laplacian and the nullspace of the Cayley matrix of \(\hat{G}\) do \emph{not} depend on $A,B$.
  Hence, the polynomial $p(\hat{\mathbf{k}})$ does not depend on \(A,B\).
  \medskip

  Evaluating the polynomial $p(\hat{\mathbf{k}})$ at the $\hat{\mathbf{k}}$ given by~\cref{eq:embed01}, we obtain the equation
  \begin{equation}\label{eq:disguised-toric-rectangle}
    k_1k_2k_3k_4\bigl(k_1k_3(a+c-(\alpha+\beta))^2-k_2k_4(b+d-(\alpha+\beta))^2\bigr)=0.
  \end{equation} Since \(k_1,\dots,k_4>0\),~\cref{eq:disguised-toric-rectangle} is equivalent to
  \begin{equation}\label{eq:disguised-toric-rectangle2}
    \frac{k_1k_3}{k_2k_4} = \biggl(\frac{b+d-(\alpha+\beta)}{a+c-(\alpha+\beta)}\biggr)^2.
  \end{equation}

  The system generated by \(G,\mathbf{k}\) has a toric realisation using the graph \(\hat{G}\) if and only if there exist \(0<a,b,c,d<\min\{\alpha,\beta\}\) satisfying~\cref{eq:disguised-toric-rectangle} or, equivalently,~\cref{eq:disguised-toric-rectangle2}.
  Hence, in order to determine when such a realisation exists, we need to find the relative maximum and minimum of the function
  \[
    \biggl(\frac{b+d-(\alpha+\beta)}{a+c-(\alpha+\beta)}\biggr)^2
  \] restricted to \(0<a,b,c,d<\min\{\alpha,\beta\}\).
  First assume that $\alpha\not=\beta$.
  The function \(f(x)=(x-(\alpha+\beta))^2\) is a parabola with a double zero at \(x=\alpha+\beta\), which is bigger than \(2\min\{\alpha,\beta\}\).
  Hence, in the region \(0<x<2\min\{\alpha,\beta\}\), its maximum is \((\alpha+\beta)^2\) at \(x=0\) and its minimum is \((\alpha-\beta)^2\) at \(x=2\min\{\alpha,\beta\}\).
  Hence, the system generated by \(G,\mathbf{k}\) has a toric realisation using the graph \(\hat{G}\) if
  \begin{equation}\label{eq:alphaBeta}
    \Bigl(\frac{\alpha-\beta}{\alpha+\beta}\Bigr)^2<\frac{k_1 k_3}{k_2 k_4}<\Bigl(\frac{\alpha+\beta}{\alpha-\beta}\Bigr)^2.
  \end{equation}
  For the case $\alpha=\beta$, the reaction network $G$ can always be realised using a single target network, where the single target is the intersection of the diagonals of the rectangle given by $\mathbf{y}_1,\dots,\mathbf{y}_4$.
  Hence, by \cite{cjy-singleT}~for all $k_1,\dots,k_4$ the system generated by $G$ and $\mathbf{k}$ is disguised toric.
  Observe that, fixing $\beta>0$ and considering the limit $\alpha\to\beta$ of~\cref{eq:rectangle-disg-locus-open}, the~\cref{eq:disguised-toric-rectangle} imposes no restriction on $k_1,\dots,k_4$, which agrees with the previous fact.
\end{proof}

\begin{corollary}\label{coro:dis-toric-locus-rectangle}
  We have $\hat{K}(G)$ is the set of $k_1,\dots,k_4>0$ such that
  \[
    \Bigl(\frac{\alpha-\beta}{\alpha+\beta}\Bigr)^2\le\frac{k_1 k_3}{k_2 k_4}\le\Bigl(\frac{\alpha+\beta}{\alpha-\beta}\Bigr)^2.
  \]
\end{corollary}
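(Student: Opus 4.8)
The plan is to upgrade \cref{th:diagrectangle} in two directions: from realizability using the specific graph $\hat{G}$ to membership in the full disguised toric locus, and from the open conditions~\eqref{eq:rectangle-disg-locus-open} to the closed conditions of the statement. For the first direction I would invoke \cite[Theorem 4.7]{cjy}: since $G$ and $\hat{G}$ share the source vertices $\mathbf{y}_1,\dots,\mathbf{y}_4$ and $\hat{G}$ is the complete directed graph on these sources, a system with these sources is disguised toric if and only if it admits a complex balanced realization using $\hat{G}$, allowing the rates $\hat{k}_{ij}\ge 0$ as in~\cref{fig:rectange-complete}, so that vanishing rates correspond to passing to a subgraph. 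Thus $\hat{K}(G)$ is exactly the set of $\mathbf{k}$ for which~\eqref{eq:embed01} can be solved with $\hat{\mathbf{k}}$ complex balanced.

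For the inclusion of $\hat{K}(G)$ in the closed region, I would note that~\eqref{eq:embed01} already exhausts the equidynamic locus of $\mathbf{k}$ in $\hat{G}$: each source of $\hat{G}$ has three outgoing reactions spanning a rank-two matrix, so $\ker A_1\times\dots\times\ker A_4$ is four-dimensional, matching the parameters $a,b,c,d$. Imposing $\hat{k}_{ij}\ge 0$ forces $0\le a,b,c,d\le\min\{\alpha,\beta\}$, and the complex balance relation~\eqref{eq:disguised-toric-rectangle2} then confines $k_1k_3/(k_2k_4)$ to the range of $\bigl((b{+}d{-}(\alpha{+}\beta))/(a{+}c{-}(\alpha{+}\beta))\bigr)^2$ over the closed box, which is precisely the closed interval $[((\alpha-\beta)/(\alpha+\beta))^2,\ ((\alpha+\beta)/(\alpha-\beta))^2]$. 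Combined with \cref{th:diagrectangle}, this settles both inclusions on the interior.

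The remaining, and main, point is to show that the \emph{boundary} values also lie in $\hat{K}(G)$; this is where the realization via $\hat{G}$ degenerates. Assuming without loss of generality $\alpha>\beta$ (the case $\alpha=\beta$ makes the bounds $0$ and $+\infty$, so the region is all of $\rgz^E$, consistent with the single-target realization used in \cref{th:diagrectangle}), I would treat the upper bound $k_1k_3/(k_2k_4)=((\alpha+\beta)/(\alpha-\beta))^2$ by setting $a=c=\beta$ and $b=d=0$, and symmetrically $a=c=0$, $b=d=\beta$ for the lower bound. Four of the twelve rates then vanish, and I would first check that the surviving subgraph is weakly reversible by exhibiting a directed cycle through each remaining edge. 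The crux is then to verify the complex balance condition directly: writing $u=\tilde{x}_1^A$ and $v=\tilde{x}_2^B$, the balance equations at $\mathbf{y}_2$ and $\mathbf{y}_4$ force $u=k_1(\alpha-\beta)/(k_2(\alpha+\beta))$ and $u=k_4(\alpha+\beta)/(k_3(\alpha-\beta))$, whose compatibility is exactly the boundary equality; the balance equations at $\mathbf{y}_1$ and $\mathbf{y}_3$ then each determine $v$, and a short computation shows the two expressions for $v$ coincide (both cross-multiplied sides reduce to $4\alpha^2\beta^2k_1k_4/(\alpha^2-\beta^2)$), yielding a genuine positive complex balanced steady state. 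I expect this boundary verification to be the main obstacle, precisely because the complex balanced realization leaves the full graph $\hat{G}$ and one must confirm complex balance on the degenerate support subgraph rather than simply reading it off the vanishing of the toric polynomial $p(\hat{\mathbf{k}})$.
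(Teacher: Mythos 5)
Your proposal is correct and follows essentially the same route as the paper: reduce via \cite[Theorem 4.7]{cjy} to complex balanced realizations on $\hat{G}$ with rates $\hat{k}_{ij}\ge 0$, invoke \cref{th:diagrectangle} for the open region, and attain the boundary by setting some $\hat{k}_{ij}=0$ in~\eqref{eq:embed01}. The only difference is that you carry out explicitly what the paper leaves as a one-line remark — checking weak reversibility of the degenerate support subgraph and verifying the complex balance equations directly at the boundary (your computations there are correct) — which makes your write-up somewhat more complete than the paper's.
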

\begin{proof}
  By \cite[Theorem 4.7]{cjy}, it is enough to look at the complete directed graph on the source vertices of $G$, i.e., $\hat{G}$, while allowing some $\hat{k}_{ij} = 0$ (which practicaly means that we are focusing on $\hat{G}$ and its weakly reversible subgraphs).
  The result follows from Theorem \ref{th:diagrectangle}, together with the remark that if one of the inequalities
  \[
    \Bigl(\frac{\alpha-\beta}{\alpha+\beta}\Bigr)^2\le\frac{k_1 k_3}{k_2 k_4}\le\Bigl(\frac{\alpha+\beta}{\alpha-\beta}\Bigr)^2
  \] is {\em not}~strict, then we can just choose some $\hat{k}_{ij} = 0$ in (\ref{eq:embed01}).
\end{proof}

\begin{remark}\label{rmk:rectangele-tangent}
  Observe that
  $$ \left(\frac{\alpha-\beta}{\alpha+\beta}\right)^2$$ is the square of the tangent of the angle between the vectors $(\alpha,\beta)$ and $(1,1)$, which is a measure of how far is $(\alpha A,\beta B)$ of lying on one of the diagonal of the rectangle given by $\mathbf{y}_1,\dots,\mathbf{y}_4$.
  So, the worst case is when such an angle is $\pi/4$, that is, when $\alpha$ or $\beta$ is zero.
  In this case, the network $G$ is weakly reversible and its toric locus is $k_1k_3=k_2k_4$, which is also its disguised toric locus since $\tan(\pi/4)=1$.
\end{remark}

\section{The $N$-gon on a line}
\label{sec:the-n-gon-in-a-line}
In this section we study the complete E-graph \(G=(V,E)\) over the finite set \(V\subseteq \bR^2\) of nonnegative integer points on the line \(\{X_1+X_2 =N-1\}\) see~\cref{fig:ngon}.
It corresponds to $N$ vertices on a line.
Following the construction and notation introduced in~\cref{sec:quadrilat}, given a vector of rate constants \(\mathbf{k}\in\rgz^E\) we realize the dynamical system generated by \(G\) and \(\mathbf{k}\) by an E-graph with one reaction per source.
Now, there are \(N\) vectors \(\mathbf{u}_i\).
The direction of the vectors \(\mathbf{u}_1\) and \(\mathbf{u}_N\) does not depend on the values of \(\mathbf{k}\), but for every vector \(\mathbf{u}_2,\dots,\mathbf{u}_{N-1}\) there are two possibilities.
Hence, now we need to consider \(2^{N-2}\) chambers in \(\rgz^E\), one for every possible sequence of directions of the vectors \(\mathbf{u}_2,\dots,\mathbf{u}_{N-1}\), and, if in such a sequence there is a unique direction change, we call that chamber a \emph{single-sign-change} chamber.
So, there are \(N-2\) single-sign-change chambers, one for each vector \(\mathbf{u}_2,\dots,\mathbf{u}_{N-1}\).
\medskip

In this section,~\cref{thr:cycles-are-done} below shows, by means of algebraic methods, that for every \(\mathbf{k}\) belonging to a single-sign-change chamber the system generated by \(G\) and \(\mathbf{k}\) is disguised toric.
So, the disguised toric locus \(\hat{K}(G)\) contains at least \(N-2\) regions in \(\rgz^E\) of positive measure,  while the toric locus \(K(G)\) has codimension \(N-2\), and therefore has Lebesgue measure zero (the codimension is given by the deficiency \(\delta=N-2\) of \(G\)).

\begin{remark}
  After a change of coordinates, given by the maximal minors of the negative of the Laplacian of \(G\) (see \cite{MR2561288}), the toric locus $K(G)$ is parametrized by the monomial map $\nu:\bP^1\rightarrow\bP^{N-1}$.
  That is, the toric locus is the rational normal curve in $\bP^{N-1}$ (see \cite[Proposition 5.2.1]{shiuTh}).
\end{remark}

\begin{figure}
  \begin{center}
    \begin{tikzpicture}[scale=2]
      \draw[step=0.3, ultra thin, lightgray](-0.1,-0.1) grid (2.9,2.5);
      \draw[-{Stealth},semithick] (-0.2,0) -- (3.1,0) node[right] {$X_1$};
      \draw[-{Stealth},semithick] (0,-0.2) -- (0,2.6) node[above] {$X_2$};
      \foreach \x in {1,2,3}
      \filldraw[fill]({2.1-0.3*(\x-1)},{0.3*(\x-1)}) coordinate (A\x) node[above right]
      {$\mathbf{y_{\x}}$} circle (0.05);
      \foreach \x in {5,6,7}
      \filldraw[fill] ({2.1-0.3*\x},{0.3*\x}) coordinate (A\x) circle (0.05);
      \node[above right] at (A7) {$\mathbf{y_{N}}$};
      \node[above right] at (A6) {$\mathbf{y_{N-1}}$};
      \node[above right] at (A5) {$\mathbf{y_{N-2}}$};
      \def\stp{0.1};
      \def\stpp{0.3};
      \path (A5) +(\stp,-\stp) coordinate (A51) +(\stpp,-\stpp) coordinate (A52);
      \path (A3) +(-\stp,\stp) coordinate (A31) +(-\stpp,\stpp) coordinate (A32);
      \draw[line width=2pt, line cap=round, dash pattern=on 0pt off 3\pgflinewidth] (A51) -- (A52);
      \draw[line width=2pt, line cap=round, dash pattern=on 0pt off 3\pgflinewidth] (A31) -- (A32);
    \end{tikzpicture}
  \end{center}
  \caption{The $N$-gon on a line.}
  \label{fig:ngon}
\end{figure}
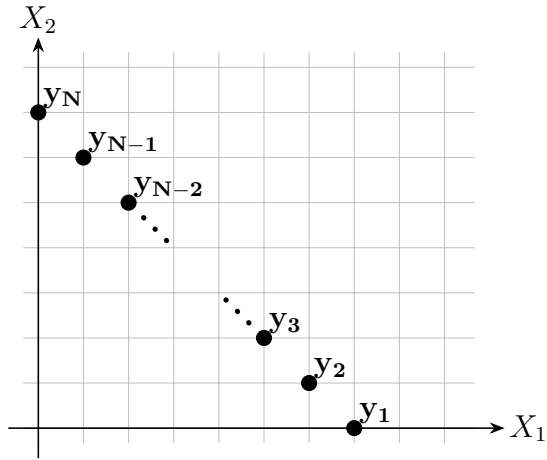

\begin{theorem}\label{thr:cycles-are-done}
  Consider the ``N-gon on a line'' network given by the  \(G=(V,E)\) introduced above.
  Given \(\mathbf{k}\in\rgz^E\) belonging to a single-sign-change chamber, the system generated by \(G\) and \(\mathbf{k}\) is disguised toric.
\end{theorem}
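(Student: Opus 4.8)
The plan is to mirror the argument used for the quadrilateral in \cref{thr:quadrilater-line-main}, now for an arbitrary number of collinear vertices. First I would reduce \(G\), following \cref{sec:quadrilat}, to a graph \(G^{*}\) with one reaction per source. Since all edge vectors \(\by_j-\by_i\) are proportional to \(\begin{psmallmatrix*}[r] 1 \\ -1 \end{psmallmatrix*}\), each \(\mathbf{u}_i\) is a scalar multiple of this vector, say \(\mathbf{u}_i = c_i\begin{psmallmatrix*}[r] 1 \\ -1 \end{psmallmatrix*}\), and the generated system is \(\frac{\dd \mathbf{x}}{\dd t} = \sum_{i=1}^{N} c_i\begin{psmallmatrix*}[r] 1 \\ -1 \end{psmallmatrix*} x_1^{N-i}x_2^{i-1}\). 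Because \(\mathbf{u}_1\) always points along \(\begin{psmallmatrix*}[r] -1 \\ 1 \end{psmallmatrix*}\) and \(\mathbf{u}_N\) along \(\begin{psmallmatrix*}[r] 1 \\ -1 \end{psmallmatrix*}\), we have \(c_1<0\) and \(c_N>0\); the single-sign-change hypothesis means the sequence \(c_1,\dots,c_N\) changes sign exactly once, so there is an index \(m\) with \(c_1,\dots,c_m<0\) and \(c_{m+1},\dots,c_N>0\).

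Next I would realize this system using the reversible path \(\hat{G}\) on \(\by_1\leftrightarrow\by_2\leftrightarrow\cdots\leftrightarrow\by_N\) (the \(N\)-vertex analogue of \cref{fig:4thchamber-detail-completion}) and impose the detailed balance condition on \(\hat{G}\). Writing \(\hat{k}_{i,i+1}\) and \(\hat{k}_{i+1,i}\) for the forward and backward rates, the coefficient of the monomial \(x_1^{N-i}x_2^{i-1}\) in \(F_{\hat{G},\hat{\mathbf{k}}}\) equals \((\hat{k}_{i,i-1}-\hat{k}_{i,i+1})\begin{psmallmatrix*}[r] 1 \\ -1 \end{psmallmatrix*}\) (with conventions \(\hat{k}_{1,0}=\hat{k}_{N,N+1}=0\)), so matching the two vector fields amounts to \(\hat{k}_{i,i-1}-\hat{k}_{i,i+1}=c_i\) for every \(i\). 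On the other hand, since \(\mathbf{x}^{\by_{i+1}}/\mathbf{x}^{\by_i}=x_2/x_1\), the detailed balance condition for \(\hat{G}\) at a state \(\tilde{\mathbf{x}}\) is simply \(\hat{k}_{i,i+1}/\hat{k}_{i+1,i}=\alpha\) for all \(i\), where \(\alpha\coloneqq\tilde{x}_2/\tilde{x}_1\). I would therefore set \(\hat{k}_{i+1,i}=b_i\) and \(\hat{k}_{i,i+1}=\alpha b_i\), reducing both requirements to the single linear recurrence \(b_{i-1}-\alpha b_i=c_i\) with boundary values \(b_0=b_N=0\).

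The key observation is that this recurrence is consistent precisely when \(\alpha\) is a positive root of \(P(\alpha)\coloneqq\sum_{i=1}^{N} c_i\alpha^{i-1}\), which is exactly the steady-state equation of the reduced system (divide \(\dd x_1/\dd t = 0\) by \(x_1^{N-1}\) and set \(\alpha=x_2/x_1\)). Here the single-sign-change hypothesis does the work: the coefficients \(c_1,\dots,c_N\) of \(P\) exhibit exactly one sign change, so by Descartes' rule of signs \(P\) has exactly one positive real root \(\alpha_0>0\) (existence also follows from \(P(0)=c_1<0\) and \(P(\alpha)\to+\infty\) as \(\alpha\to+\infty\), since \(c_N>0\)). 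Choosing \(\tilde{\mathbf{x}}=(1,\alpha_0)\) then produces the rate constants, and what remains is to check positivity. For \(i\ge m\) the closed form \(b_i=\sum_{j=i+1}^{N} c_j\,\alpha_0^{\,j-i-1}\) has only positive terms, so \(b_i>0\); for \(i\le m\) I would instead run the recurrence forward from \(b_0=0\), where \(b_i=(b_{i-1}-c_i)/\alpha_0\) with \(c_i<0\) forces \(b_i>0\) by induction. Since \(\alpha_0\) is a root, the two descriptions agree, giving \(b_1,\dots,b_{N-1}>0\).

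With all \(\hat{k}_{i,i+1},\hat{k}_{i+1,i}>0\) fixed, \((\hat{G},\hat{\mathbf{k}})\) is detailed balanced with steady state \((1,\alpha_0)\), hence complex balanced, and it generates the same system as \((G,\mathbf{k})\); therefore the system is disguised toric. I expect the main obstacle to be the positivity bookkeeping in the last step: one must exploit \emph{both} the single-sign-change structure of the \(c_i\) \emph{and} the fact that \(\alpha_0\) is a genuine root (not merely any positive number) to guarantee that every \(b_i\) comes out strictly positive at once. This is exactly where the hypothesis that the chamber is single-sign-change is essential rather than incidental, and it is what ultimately replaces the explicit two-parameter construction carried out for \(N=4\).
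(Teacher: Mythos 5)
Your proof is correct and follows essentially the same route as the paper's: realize the reduced one-reaction-per-source system on the reversible path graph, impose detailed balance at a positive root \(\alpha_0\) of the steady-state polynomial, and determine the new rate constants by the linear recurrence \(b_{i-1}-\alpha_0 b_i=c_i\), which is exactly the paper's Horner-style recursion. The only difference is that you carry out the positivity check for an arbitrary sign-change position \(m\) (forward induction on one side of \(m\), the closed-form sum on the other, glued by the root condition), whereas the paper writes out only the case where the change occurs at \(\mathbf{u}_{N-1}\) and asserts the general case follows straightforwardly; your bookkeeping is precisely the detail needed to justify that assertion.
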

\begin{proof}
  First, we assume that the direction change occurs at the vector \(\mathbf{u}_{N-1}\).
  For example, for \(N=4\) that corresponds to (A) Chamber \(\mathcal{C}_1\) of~\cref{fig:chamb1234}.

  That is, first we show that every system generated by the E-graph \(G^{*}=(V,E^{*})\), where
  \[
    E^{*}\coloneqq \{\by_i\to\by_{i+1}\}_{i=1,\dots,N-1}\bigcup \{\by_N\to\by_{N-1}\},
  \] is disguised toric; then the general case will follow straightforwardly.
  \medskip

  Consider the E-graph \(\hat{G}=(V,\hat{E})\) where
  \[
    \hat{E}\coloneqq \{\by_i\to\by_{i+1},\by_{i+1}\to\by_i\}_{i=1,\dots,N-1}.
  \] The E-graph \(\hat{G}\) is a minimal extension of \(G^{*}\) where the detailed balanced conditions can be established.
  In fact, we will realize the system generated by \(G^{*}\) and \(\mathbf{k}^{*}\) using the graph \(\hat{G}\) in such a way that the detailed balanced condition for \(\hat{G}\) will be satisfied regardless of the values of \(\mathbf{k}^{*}\).

  Given \(\mathbf{k}^{*}\in\rgz^{E^{*}}\) and \(\hat{\mathbf{k}}\in\rgz^{\hat{E}}\) we simplify the notation \(k^{*}_{\by_i\to\by_{i\pm 1}}\) to \(k^{*}_i\) and \(\hat{k}_{\by_i\to\by_j}\) to \(\hat{k}_{ij}\).

  Fix a steady state \((\tilde{x}_0,\tilde{x}_1)\) of the system generated by \(G^{*}\) and \(\mathbf{k}^{*}\).
  The ratio \(\alpha\coloneqq \frac{\tilde{x}_0}{\tilde{x}_1}\) is a root of the polynomial
  \[
    p(x) \coloneqq k^{*}_1x^{N-1}+\dots+k^{*}_{N-1} x-k^{*}_N.
  \]
  Consider \(\hat{\mathbf{k}}\in\rgz^{\hat{E}}\) given by
  \begin{align*}
    \begin{aligned}
      \hat{k}_{12}& \coloneqq k^{*}_1\\
      \hat{k}_{N\,N-1}& \coloneqq k^{*}_N
    \end{aligned} & &
                      \begin{aligned}
                        \hat{k}_{i\,i+1} & \coloneqq k^{*}_i+\hat{k}_{i\,i-1}\\
                        \hat{k}_{i\,i-1} & \coloneqq \alpha\hat{k}_{i-1\, i}\\
                      \end{aligned}
  \end{align*} for every \(i=2,\dots,N-1\).
  Now, the system generated by \(\hat{G}\) and \(\hat{\mathbf{k}}\) is equal to the one generated by \(G^{*}\) and \(\mathbf{k}^{*}\).
  Indeed, for \(i=2,\dots,N-1\), the coefficient of the monomial \(x_0^{N-i}x_1^{i-1}\) is
  \[
    \hat{k}_{i\,i+1}\begin{pmatrix*}[r]
      -1 \\
      1
    \end{pmatrix*} +
    \hat{k}_{i\,i-1}\begin{pmatrix*}[r]
      1 \\
      -1
    \end{pmatrix*} =
    \begin{pmatrix*}[r]
      -k^{*}_i\\
      k^{*}_i
    \end{pmatrix*}.
  \] Moreover, the point \((\tilde{x}_0,\tilde{x}_1)\) is a detailed balance steady state for the system generated by \(\hat{G}\), \(\hat{\mathbf{k}}\) if and only if for every \(i=1,\dots,N-1\)
  \[
    \hat{k}_{i+1\, i} = \alpha\hat{k}_{i\, i+1}.
  \] Hence, by the recursive definition of \(\hat{k}_{i+1\, i}\), we just need to check the case \(i=N-1\).
  Consider the sequence
  \[
    q_{i}\coloneqq \hat{k}_{N-i-1\, N-i},
  \] for \(i=N-1,\dots,1\).
  So, we have \(q_{N-2}=k^{*}_1\) and for \(i=N-3,\dots,0\)
  \[
    q_i=k^{*}_i+q_{i+1}\alpha.
  \]  Hence, by Horner's Method,
  \(\alpha q_0=p(\alpha)+k^{*}_N\) and finally
  \[
    \alpha\hat{k}_{N-1\, N}=\alpha q_0 = k^{*}_N=\hat{k}_{N\,N-1}.\qedhere
  \]
\end{proof}

\begin{remark}
  Another way one can prove this is by using the theory of single target networks. Note that the other chambers will have both disguised-toric and not-disguised-toric points because the presence of three or more sign changes allows us to get two or more equilibria, which rules out disguised-toric.
\end{remark}

\section{An algorithm for computing the disguised toric locus}\label{sect:algor2}
We are now ready to give an algorithm to obtain the disguised toric locus of a reaction network.

First, we introduce some  notation.
By a cone, we mean a polyhedral cone.
Consider two reaction networks $G=(V,E)$, $\hat{G}=(\hat{V}, \hat{E})$.
We consider the locus of $\mathbf{k}\in \rgz^E$ for which the dynamical system generated by $(G,\mathbf{k})$ has a complex balanced realization using the graph $\hat{G}$,
$$\hat{K}(G,\hat{G}):=\{\mathbf{k}\in \rgz^E\ :\ \exists\,\hat{\mathbf{k}}\in\rgz^{\hat{E}} \mbox{ with $F_{G,\mathbf{k}}(\mathbf{x})=F_{\hat{G},\hat{\mathbf{k}}}(\mathbf{x})$ and $(\hat{G},\hat{\mathbf{k}})$ complex balanced}\}.$$

The set $\hat{K}(G,\hat{G})$ is the part of the disguised toric locus of $G$ that can be obtained using the graph $\hat{G}$.
For example, for any network $G$, we have $\hat{K}(G,G)=\mathcal{V}(G,K(G))$.
Another example, for the network $G$ in \cref{sec:complete3pts} we showed that, for all $\mathbf{k}\in \rgz^E$, $\mathcal{V}(G,\mathbf{k})\cap K(G)\not=\emptyset$ which is equivalent to $\hat{K}(G)=\hat{K}(G,G)=\rgz^E$.

Given a $\mathbf{y}\in\R^n$, we denote by $C_{G,\mathbf{y}}$ the positive cone generated by all the reaction vectors $\mathbf{y}\to \mathbf{y}'$ of $G$ (notice that, when $\mathbf{y}$ is not a source of $G$, then $C_{G,\mathbf{y}}=\{0\}$).
We also consider the cone $\Pi_{G, \hat{G}}(\mathbf{y})\subseteq \rgz^{\hat{E}}$ given by the condition on $\hat{\mathbf{k}}\in \rgz^{\hat E}$
\begin{equation*}
  \sum_{\mathbf{y} \to \mathbf{y'} \in \hat G} \hat{{k}}_{\mathbf{y} \to \mathbf{y'}} (\mathbf{y'} - \mathbf{y}) \in  C_{G,\mathbf{y}},
\end{equation*}
and the cone
$$
\Pi_{G,\hat{G}}:=\bigcap_{\mathbf{y} \mbox{\scriptsize{ source of $G$ or }} \hat{G}} \Pi_{G,\hat{G}}(\mathbf{y})\subseteq \rgz^{\hat{E}}.
$$
Observe that $\sum_{\mathbf{y} \to \mathbf{y'} \in \hat G} \hat{{k}}_{\mathbf{y} \to \mathbf{y'}} (\mathbf{y'} - \mathbf{y})$ is the coefficient of $\mathbf{x}^{\mathbf{y}}$ in $F_{\hat{G},\hat{\mathbf{k}}}$.
So, the set \(\Pi_{G,\hat{G}}\) is the locus (possibly empty) of $\hat{\mathbf{k}}\in\rgz^{\hat{E}}$ for which the dynamical system generated by $(\hat{G},\hat{\mathbf{k}})$ has a realization using the network $G$.

For example, vertex $\mathbf{y}_1$ in \cref{fig:rectange-complete} is a source in both the blue ($G$) and the yellow ($\hat{G}$) networks.
For the yellow network, the coefficient of $\mathbf{x}^{\mathbf{y}_1}$ is a multiple of $\mathbf{y}_5-\mathbf{y}_1$, but for the blue network it is a linear combination of three vectors.
Now, such a linear combination is a positive multiple of $\mathbf{y}_5-\mathbf{y}_1$ if and only if $\hat{\mathbf{k}}\in \Pi_{G,\hat{G}}(\mathbf{y}_1)$.

We are interested to parametrize $\Pi_{G,\hat{G}}$, where by to parametrize we mean to give a surjective rational map.
Observe that, since $\Pi_{G,\hat{G}}$ is an intersection of cones, it always admits a parametrization.
Typically, we are interested in cases where $G$ and $\hat{G}$ have the same sources and $C_{G,\mathbf{y}}\subseteq C_{\hat{G},\mathbf{y}}$ for all source.
So, $\dim \rgz^E\le \dim \Pi_{G,\hat{G}}$ and we may use as source for the parametrization some subspace $X\subseteq \rgz^E\times \R^m$, for some $m\ge \dim \Pi_{G,\hat{G}}- \dim \rgz^E$ and with the projection $p\,:\,X\to \rgz^E$ surjective.
For example, consider again \cref{fig:rectange-complete} with the blue ($G$) and the yellow ($\hat{G}$) networks.
Equations~\eqref{eq:embed01} define a parametrization \(\rho: X\to \Pi(G,\hat{G})\) where $X=\rgz^E\times \mathcal{Q}$ with $\mathcal{Q}$ the open cube in $\R^4$ of side length $\min\{\alpha,\beta\}$.

Obviously, among all the parametrizations $\rho:X\to \Pi_{G,\hat G}$ with $X\subseteq \rgz^E\times \R^m$ and $p\,:\,X\to \rgz^E$ surjective, we are interested in those respecting the construction of $F_{G,\mathbf{k}}$ and $F_{\hat{G},\hat{\mathbf{k}}}$. To this intent, we introduce the following restrictions.

Fix a rational map \(\rho:X\to\Pi_{G,\hat G}\) with $X\subseteq \rgz^E\times \R^m$ and with the projection $X\to \rgz^E$ being surjective.
We call \(\rho\) \emph{mass-action faithful} if, for all $(\mathbf{k},\alpha)\in X$, setting $\hat{\mathbf{k}} := \rho(\mathbf{k}, \alpha)$ the couples $(G, {\mathbf{k}})$ and $(\hat G, \hat{\mathbf{k}})$ generate the same dynamical system.

When \(\rho\) is mass-action faithful, we call it \emph{dynamically complete} if, for all $\hat{\mathbf{k}}\in\rgz^{\hat{E}}$, there is $(\mathbf{k},\alpha)\in X$ such that $\rho(\mathbf{k},\alpha)\in \mathcal{V}(\hat{G}, \hat{\mathbf{k}})$.
In other words, if for each dynamical system generated by $\hat{G}$, there is $(\mathbf{k}, \alpha)\in X$ such that $(G,\mathbf{k})$ also generates such a dynamical system.
Notice that if \(\rho\) is surjective, then it is dynamically complete.

We call \(\rho\) a \emph{mass-action parametrization} if it is both mass-action faithful and dynamically complete.

We call \(\rho\) \emph{target-surjective} if, given $\mathbf{k}\in\rgz^E$ and $\hat{\mathbf{k}}\in\rgz^{\hat{E}}$ generating the same dynamical systems, there exists \(\alpha\in\R^m\) such that $(\mathbf{k},\alpha)\in X$ and \(\rho(\mathbf{k},\alpha)=\hat{\mathbf{k}}\).
\medskip

Now, we can state Theorem~\ref{thr:algorithm-based-theorem} on which our algorithm is based.
Its proof follows straightforwardly from the definitions.

\begin{theorem}\label{thr:algorithm-based-theorem}
  Let $G=(V,E)$ and $\hat{G}=(\hat{V},\hat{E})$ be two E-graphs.
  Consider a rational map $\rho:X\to \Pi_{G,\hat{G}}$ with $X\subseteq \rgz^E\times \R^m$ for some $m$ and consider the projection $p:X\to \rgz^E$.
  If \(\rho\) is a mass-action parametrization, then
\[
  \hat{K}(G,\hat{G}) = \mathcal{V}\Bigl(G,p\bigl(\rho^{-1}\bigl(\mathcal{V}(\hat{G},K(\hat{G}))\bigr)\bigr)\Bigr) = \mathcal{V}(G,p(\rho^{-1}(\hat{K}(\hat{G},\hat{G})))).
\] Moreover, if \(\rho\) is also surjective, then
\[
  \hat{K}(G,\hat{G}) = \mathcal{V}(G,p(\rho^{-1}(K(\hat{G})))).
\] Finally, if \(\rho\) is also both surjective and target-surjective, then
\[
  \hat{K}(G,\hat{G}) = p(\rho^{-1}(K(\hat{G}))).
\]
\end{theorem}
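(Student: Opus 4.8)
The plan is to prove all three equalities by a direct chase through the definitions, letting each of the three properties of $\rho$ control one inclusion. Before starting I would record two facts used throughout. First, for any network $H$ one has $\hat{K}(H,H)=\mathcal{V}(H,K(H))$ (already observed in the text); applied to $H=\hat G$ this identifies the two right-hand sides on the first line, so it suffices to establish any one of them, and it lets me freely rewrite $\mathcal{V}(\hat G,K(\hat G))$ as $\hat K(\hat G,\hat G)$. Second, $\hat K(G,\hat G)$ is closed under the $G$-equidynamic relation: if $\mathbf k\in\hat K(G,\hat G)$ and $F_{G,\mathbf k}=F_{G,\mathbf k'}$, the toric realization witnessing $\mathbf k$ also witnesses $\mathbf k'$, so $\mathcal{V}(G,\hat K(G,\hat G))=\hat K(G,\hat G)$.

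For the first equality I would set $A:=\mathcal{V}(\hat G,K(\hat G))$ and $B:=p(\rho^{-1}(A))$, and prove $\hat K(G,\hat G)=\mathcal{V}(G,B)$ by two inclusions. For $\mathcal{V}(G,B)\subseteq\hat K(G,\hat G)$, I would first show $B\subseteq\hat K(G,\hat G)$: any $\mathbf k\in B$ arises from some $(\mathbf k,\alpha)\in X$ with $\hat{\mathbf k}:=\rho(\mathbf k,\alpha)\in A$; mass-action faithfulness gives $F_{G,\mathbf k}=F_{\hat G,\hat{\mathbf k}}$, and $\hat{\mathbf k}\in A=\hat K(\hat G,\hat G)$ supplies a toric $\hat{\mathbf k}'\in K(\hat G)$ with $F_{\hat G,\hat{\mathbf k}}=F_{\hat G,\hat{\mathbf k}'}$, so $F_{G,\mathbf k}=F_{\hat G,\hat{\mathbf k}'}$ exhibits $\mathbf k\in\hat K(G,\hat G)$; applying $\mathcal{V}(G,-)$ and the $G$-closure of $\hat K(G,\hat G)$ closes this inclusion. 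For the reverse, take $\mathbf k\in\hat K(G,\hat G)$ with witness $\hat{\mathbf k}\in K(\hat G)$, $F_{G,\mathbf k}=F_{\hat G,\hat{\mathbf k}}$; here dynamical completeness is the crucial input, furnishing $(\mathbf k',\alpha)\in X$ with $\hat{\mathbf k}'':=\rho(\mathbf k',\alpha)\in\mathcal{V}(\hat G,\hat{\mathbf k})\subseteq A$, whence $\mathbf k'\in B$, while chaining $F_{G,\mathbf k'}=F_{\hat G,\hat{\mathbf k}''}=F_{\hat G,\hat{\mathbf k}}=F_{G,\mathbf k}$ (faithfulness, then the $\hat G$-equidynamic relation, then the choice of $\hat{\mathbf k}$) yields $\mathbf k\in\mathcal{V}(G,\mathbf k')\subseteq\mathcal{V}(G,B)$.

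For the second equality I would use that surjectivity implies dynamical completeness, so $\rho$ is a mass-action parametrization and the first equality applies; it then remains to replace $A$ by $K(\hat G)$. Since $K(\hat G)\subseteq A$, the inclusion $\mathcal{V}(G,p(\rho^{-1}(K(\hat G))))\subseteq\hat K(G,\hat G)$ is immediate. For the reverse, given $\mathbf k\in\hat K(G,\hat G)$ with witness $\hat{\mathbf k}\in K(\hat G)$, the equality $F_{G,\mathbf k}=F_{\hat G,\hat{\mathbf k}}$ shows $\hat{\mathbf k}\in\Pi_{G,\hat G}$, so surjectivity of $\rho$ onto $\Pi_{G,\hat G}$ produces $(\mathbf k',\alpha)\in\rho^{-1}(\{\hat{\mathbf k}\})\subseteq\rho^{-1}(K(\hat G))$; then $\mathbf k'\in p(\rho^{-1}(K(\hat G)))$ and faithfulness gives $F_{G,\mathbf k'}=F_{\hat G,\hat{\mathbf k}}=F_{G,\mathbf k}$, so $\mathbf k\in\mathcal{V}(G,\mathbf k')$.

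Finally, for the third equality the inclusion $p(\rho^{-1}(K(\hat G)))\subseteq\hat K(G,\hat G)$ holds by faithfulness exactly as above, so only $\hat K(G,\hat G)\subseteq p(\rho^{-1}(K(\hat G)))$ remains, and this is precisely what target-surjectivity delivers: for $\mathbf k\in\hat K(G,\hat G)$ with witness $\hat{\mathbf k}\in K(\hat G)$ generating the same system as $(G,\mathbf k)$, target-surjectivity yields $\alpha$ with $(\mathbf k,\alpha)\in X$ and $\rho(\mathbf k,\alpha)=\hat{\mathbf k}$, so $\mathbf k\in p(\rho^{-1}(K(\hat G)))$. I expect no genuine obstacle, as the content is entirely bookkeeping; the one point demanding care is keeping the three distinct equivalences separate — $F_{G,\mathbf k}=F_{\hat G,\hat{\mathbf k}}$ from faithfulness, $F_{\hat G,\hat{\mathbf k}''}=F_{\hat G,\hat{\mathbf k}}$ from the $\hat G$-equidynamic relation, and toricity of the witness — and not conflating $\mathcal{V}(\hat G,K(\hat G))$ with $K(\hat G)$ except where surjectivity or target-surjectivity permits it.
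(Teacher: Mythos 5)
Your proof is correct, and it is exactly the argument the paper intends: the paper itself omits the proof, stating only that it ``follows straightforwardly from the definitions'', and your definition-chase — faithfulness giving one inclusion, dynamical completeness (resp.\ surjectivity onto $\Pi_{G,\hat G}$, resp.\ target-surjectivity) giving the other, together with the closure of $\hat K(G,\hat G)$ under the $G$-equidynamic relation — is precisely that omitted bookkeeping. No gaps.
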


\begin{alg}\label{algorithmDisguised1}
  \hfill \\
  \rm{Input:} two reaction networks $G$ and $\hat G$.\\
  \rm{Output:} $$\hat K(G,\hat G)$$

    \textbf{Step 1.} Find a surjective target-surjective mass-action parametrization $$\rho\,:\,  X\to \Pi_{G,\hat G}.$$
    (For example, in Section~\ref{sec:rectange-E-graph}, the map $\rho$ is defined by the  equations~(\ref{eq:embed01}).) 

  \textbf{Step 2.} Compute the equation on $\mathbf{\hat{k}}$ for the toric locus $K(\hat G)$ of $\hat G$, either by eliminating the variables $x_1,\dots,x_n$ from the complex balance conditions (\ref{eq:complexBal}) or by means of the Matrix Tree Theorem (see \cite{MR2561288}).

  \textbf{Step 3.} Compute $\rho^{-1}(K(\hat G))$, which simply corresponds to substituting the components of $\rho$ within the equations for $K(\hat{G})\subseteq \rgz^{\hat{E}}$.
  (See equation~(\ref{eq:subst}) for an example.)

  \textbf{Step 4.} Project $\rho^{-1}(\hat{K}(G))\subseteq X$ to $\R^E$.
  This amounts to eliminating the parameters of $\R^m$ by means of quantifier elimination.
  (For example, in Section~\ref{sec:rectange-E-graph}, equation~(\ref{eq:alphaBeta}) is the result of eliminating $a,b,c,d$ from equation~(\ref{eq:disguised-toric-rectangle}).)
\end{alg}

\begin{remark}
  Observe that, a surjective target-surjective mass-action parametrization \(\rho\) (from Step 1) can be quite intricate to get, or it can lead to a computationally unfeasible quantifier elimination in Step 4.
  As \cref{thr:algorithm-based-theorem} highlights, the whole set $\hat{K}(G,\hat{G})$ can still be obtained with simpler \(\rho\) at expenses of computing some dynamical completions (see Definition~\ref{def:same-dynamic-locus}).
\end{remark}

Consider two reaction networks $G=(V,E)$, $\hat{G}=(\hat{V}, \hat{E})$.
Observe that from \cite[Theorem 4.10]{cjy} follows that
$$
\hat{K}(G) = \bigcup_{G' \mbox{{\scriptsize{~weakly reversible subgraph of }}}  G_{\rm comp}} \hat{K}(G,G'),
$$ where $G_{\rm comp}$ is the complete graph over the sources of $G$.
So, in order to compute the whole disguised toric locus of $G$, we can apply the previous algorithm to all the subgraphs of $G_{\rm comp}$.

Moreover, notice that, when $G'$ is not weakly reversible, $\hat{K}(G,G')=\emptyset$.
Hence,
$$
\hat{K}(G) = \bigcup_{G' \mbox{{\scriptsize{~subgraph of }}}  G_{\rm comp}} \hat{K}(G,G').
$$ So, in order to compute the whole disguised toric locus of $G$, we can apply the previous algorithm to all the subgraphs of $G_{\rm comp}$.
But now, we will show that the previous algorithm can be adapted to compute
\[
  \bigcup_{G' \mbox{\scriptsize{ subgraph of }} \hat{G}}\hat{K}(G,\hat{G})
\] with almost no additional  computational cost.
The main idea follows from the following observation: allowing some of the coordinates of $\mathbf{\hat{k}}$ to be zero is equivalent to considering the subgraph $G'$ corresponding to removing from the graph $\hat{G}$ the reactions corresponding to those vanishing  coordinates.
So, we simply need to replace $\rgz^{\hat{E}}$ by $\rgez^{\hat{E}}$, and take care that all the limit cases behave as expected.

First, it is clear that when some of the coordinates of $\mathbf{\hat{k}}$ are zero, the
complex balance conditions of the new subgraph $G'$ are the complex balance conditions of $\hat G$ evaluating those coordinates of $\hat{\mathbf{k}}$ to zero.
Moreover, it is not hard to see that the equations of $K(G')\subseteq \rgz^{E'}$ are the equations of $K(\hat{G})\subseteq \rgz^{\hat{E}}$ evaluating those coordinates of $\hat{\mathbf{k}}$ to zero.

Now, we extend \(\Pi_{G,\hat{G}}\) to contain the values of $\mathbf{\hat{k}}$ with some zero coordinates:
\[
  \hat \Pi_{G,\hat{G}} = \bigcap_{\mathbf{y} \mbox{\scriptsize{ source of $G$ or }} \hat{G}} C_{G,\mathbf{y}}\cap \overline{C_{\hat{G},\mathbf{y}}}.
\]
In order to find a parametrization \(\rho:X\to\hat \Pi_{G,\hat{G}}\), notice that $\hat \Pi_{G,\hat{G}}$ may be closed or locally closed depending on the dimension and the relative positions of $C_{G,\mathbf{y}}$ and $C_{\hat G,\mathbf{y}}$; in each case so will be $X$.

\begin{alg}[Computing the disguised toric locus]\label{algorithmDisguised2}
  \hfill \\
  \rm{Input:} Two reaction networks $G$ and $\hat G$.\\
  \rm{Output:} $$\bigcup_{G' \mbox{\scriptsize{ subgraph of }}\hat G}\hat K(G,G').$$
  \hfill \\

  \textbf{Step 1.} Find a surjective and target-surjective mass-action parametrization $$\rho\,:\,  X\to \hat\Pi_{G,\hat G}.$$

  \textbf{Step 2.} Reproduce Algorithm~\ref{algorithmDisguised1} using this $\rho$.
\end{alg}

\begin{remark}
  Note that going from Algorithm~\ref{algorithmDisguised1} to \ref{algorithmDisguised2} simply amounts to interchange the strict inequalities on the parameters of $\R^m$ defining the region $X$ by non-strict inequalities.
\end{remark}

Finally,

\begin{alg}[Computing the whole disguised toric locus]\label{algorithmDisguised3}
  \hfill \\
  \rm{Input:} A reaction network $G$.\\
  \rm{Output:} The disguised toric locus $\hat{K}(G)$ of $G$.\\

  Apply Algorithm~\ref{algorithmDisguised2} to $G$ and $\hat{G} = G_{\rm comp}$.
\end{alg}

\begin{remark}\label{rmk:}
  Note that using $G_{\rm comp}$ can be computationally unfeasible.\\
On the other hand, if in order to simplify our computations we choose $\rho$ (and $\hat G$) such that $\rho$ fails to be surjective or target-surjective, then we may still obtain {\em sufficient conditions} on ${\mathbf{k}}$  such that $(G, {\mathbf{k}})$ is disguised toric.
\end{remark}

\smallskip
\printbibliography

\bigskip \medskip

\noindent
\footnotesize

{\bf Authors:}

\smallskip

\noindent Laura Brustenga i Moncusí\\ University of Copenhagen, Denmark\\
\hfill {\tt brust@math.ku.dk}
\vspace{\baselineskip}

\noindent Gheorghe Craciun\\ University of Wisconsin-Madison, USA\\
\hfill {\tt craciun@math.wisc.edu}
\vspace{\baselineskip}

\noindent Miruna-\c Stefana Sorea\\ SISSA - Scuola Internazionale Superiore di Studi Avanzati, Trieste, Italy and RCMA Lucian Blaga University Sibiu, Romania\\
\hfill {\tt msorea@sissa.it}

\vskip5cm

\end{document}